\newtheorem{thm}{Theorem}[section]
\newtheorem{la}[thm]{Lemma}
\newtheorem{Defn}[thm]{Definition}
\newtheorem{Remark}[thm]{Remark}
\newtheorem{prop}[thm]{Proposition}
\newtheorem{cor}[thm]{Corollary}
\newtheorem{Example}[thm]{Example}
\newtheorem{Number}[thm]{\!\!}
\newenvironment{rem}{\begin{Remark}\rm}{\end{Remark}}
\newenvironment{proof}{{\noindent\bf Proof.}}%
                  {\nopagebreak\hspace*{\fill}$\Box$\medskip\par}
\newcommand{\cL}{{\mathcal L}}
\newcommand{\cD}{{\mathcal D}}
\newcommand{\R}{{\mathbb R}}
\newcommand{\C}{{\mathbb C}}
\newcommand{\sgrad}{ \textrm{sgrad}}
\newcommand{\Sph}{{\mathbb S}}
\newcommand{\cg}{{\mathfrak g}}
\begin{document}
\begin{center}
{\bf\Large Curvature and stability of quasi-geostrophic motion  }\\[4mm]
{\bf Ali Suri}
\end{center}
\begin{abstract}
\hspace*{-4.7mm}
This paper outlines the study of the curvature of the quantomorphism group and its central extension, as well as the quasi-geostrophic equation. By utilizing spherical harmonics and structure constants, a formula for computing the curvature of the $L^2$ metric on the central extension $\hat{\cg}=\cg\ltimes_\Omega\mathbb{R}$ is derived, where $\cg$ represents the Lie algebra of $\cD^s_\mu(\mathbb{S}^2)$. The sectional curvatures of the planes containing $Y_{10}$ and the tradewind current are calculated as special cases. The impact of the Rossby and Froude numbers, as well as the Coriolis effect, on the (exponential) stability of these quasi-geostrophic motions is highlighted. Finally, a lower bound for weather prediction error in a simplified model governed by the tradewind current and the Coriolis effect on a rotating sphere is suggested.
\end{abstract}
%
%
%
%
%
{\bf Key words:}
Quasi-geostrophic equation, Group of volume preserving diffeomorphisms, Central extension, Sectional curvature, Spherical harmonics, Stability, Tradewind.\\
{\bf  Mathematical Subject Classification:} 58D05, 35Q35, 53C22, 53C80.

%
%
%
%
\section{Introduction}
The study of conjugate points and the sign of sectional curvature has been an area of interest since Arnold's seminal work \cite{Arnold}. Arnold computed  the sectional curvature for the group of volume-preserving diffeomorphisms of the two-dimensional torus $\cD_\mu(\mathbb{T}^2)$ and showed that in many directions it was non-positive, leading to ideas about geodesic stability and weather forecast predictability duration \cite{Arnold-M}.

Milnor in \cite{Milnor} used structure constants and computed the sectional curvature of finite dimensional Lie groups with left invariant metrics.
Lukatskii \cite{Luk} continued the study of sectional curvature for $\cD^s_\mu(\mathbb{S}^2)$, obtaining similar estimates  for errors in weather forecast predictability. This was followed by various methods, including those proposed by Arakelyan and Savvidy \cite{Arak-Sav}, Dowker and Mo-Zheng \cite{Dowker-Zheng}, Yoshida \cite{Yoshida}, and the numerical study by Blender \cite{Blender}. Vizman \cite{Viz2001} considered the central extension of the group of volume-preserving diffeomorphisms of the torus to compute the sectional curvature of the configuration manifold for equations of charged fluids and superconductivity.

Misiolek  \cite{Misiolek-ams97, MisiolekJGP} considered the Bott-Virasoro group and studied the sectional curvature and conjugate points on the  one dimensional central extension of group of volumorphisms of circle.

Ebin and Preston \cite{Ebin-Preston, Ebin-Preston-arXiv} proved that the Euler-Arnold equations on the one-dimensional central extension of $T_e\cD(\mathbb{S}^3)$ reduce to the quasi-geostrophic equations on $\mathbb{S}^2$. Recently, Lee and Preston \cite{Lee-Pre} used this framework to study the sectional curvature, conjugate points, and stability of geodesics  on the one-dimensional central extension of the quantomorphism group. The effects of the Coriolis force on conjugate points and stability were pointed out by Tauchi and Yoneda \cite{Tauchi-Yoneda-2} also.

We propose a formula that computes the sectional curvature using spherical harmonics and the Lie algebra structure constants. Motivated by the works of Arakelyan and Savvidy \cite{Arak-Sav}, Dowker, Mo-Zheng \cite{Dowker-Zheng}, and Yoshida \cite{Yoshida}, we compute the curvature of the $L^2$ metric on $\hat\cg=\cg\ltimes_\Omega\mathbb{R}$, where $\cg=T_e\cD_\mu(\mathbb{S}^2)$ and $\Omega$ is the two-cocycle used in the central extension. Our results reproduce those of \cite{Arnold}, \cite{Luk}, \cite{Dowker-Zheng} and  \cite{Yoshida} if we restrict to the Lie algebra $\cg$. As special cases, we study the sectional curvatures of planes containing $Y_{1~0}$, the tradewind current, and other vector fields generated by spherical harmonics. Building on the works of Arnold \cite{Arnold}, Lukatskii \cite{Luk}, Dowker and Mo-Zheng \cite{Dowker-Zheng}, and Yoshida \cite{Yoshida}, we extend their findings by taking into account the effects of rotation, and compare our results with Arnold and Lukatskii's estimates of the unreliability of long-term weather forecasts. The stability effects of the Froude and Rossby numbers (rotation), as confirmed by Lee and Preston, are also taken into consideration.

Finally we provide an estimation of the error growth in the weather prediction for a homogeneous incompressible fluid modeled on the sphere $\mathbb{S}^2$, approximated by the tradewind current.
We will see that  rotation can  make the predictability period longer.
%
%
%
%

\subsection{Motivations}
The curvature properties of $\mathcal{D}^s_{\text{vol}}(\Sph^s)$ have been the subject of study in several works, such as those by Arakelyan and Savvidy, Benn, Blender, Dowker-Mo zheng, Lee-Preston, Misiolek, Tauchi-Yoneda, and Yoshida \cite{Arak-Sav, Benn2021, Blender, Dowker-Zheng, Lee-Pre, Misiolek, Tauchi-Yoneda, Yoshida}.

On the other hand  since the Coriolis force plays a crucial role in quasi-geostrophic  motion, it becomes meaningful to investigate the curvature of the volumorphism group of a rotating sphere \cite{Lee-Pre,Tauchi-Yoneda-2}.

Motivated by the works   \cite{Arak-Sav,Yoshida}, our first goal is to present a computable curvature formula for the current situation using the structure constants introduced by Hoppe \cite{Hoppe}. Despite the fact that the Wigner $3j$-symbols used in the definition of structure constants for spherical harmonics  (see A13 in \cite{Arak-Sav}) are less known in geometry books, there exists a remarkable literature about them in quantum mechanics, including works by Messiah \cite{Mess} and Varshalovich etc. \cite{Varsha}.

However, there is a growing interest in this approach due to the works of Modin, Viviani and Cifani, who employed  finite dimensional approximations of the Lie algebra $T_e\mathcal{D}^s_{\text{vol}}(\Sph^2)$, using finite dimensional algebras produced by matrix harmonics and discrete Laplacian (see e.g.  \cite{Modin, Modin2023} and the references therein).

Computing curvature and determining the sign of the sectional curvature are usually difficult tasks.
While our curvature formula can be used directly, as shown in section \ref{section sect. comp},  one can use  theorem \ref{thm curv e} to create a software-computable program  similar to the approach used by Blender in \cite{Benn2021}.

However, the computations presented in section \ref{section sect. comp} are necessary to establish the compatibility of our approach with previous works, such as those by Arakelyan and Savvidy, Dowker-Mo zheng, and especially Lukatskii \cite{Arak-Sav, Dowker-Zheng, Luk}. On the other hand, the tradewind current is intrinsically generated by the Coriolis effect. Arnold \cite{Arnold-M}, Lukatskii \cite{Luk}, and Yoshida \cite{Yoshida} presented interesting results regarding the unreliability of long-term weather forecasts when the Coriolis effect is not taken into account. As an application, we utilize our method to impose the Coriolis force on existing results.

%
%
%
%

\subsection{Fluid dynamics backgrounds}
Earth is in constant rotation around its axis from west to east, with points closer to the equator spinning faster than those near the poles. When a fluid particle moves along a meridian to  north in the northern hemisphere, its trajectory is deflected to the right due to the conservation of spinning momentum.
Conversely, in the southern hemisphere, particles moving away from the equator along a meridian are deflected to the left.
The Coriolis force (effect), is a force which in the northern hemisphere deflects  moving air (fluid)  to the right of its intended path and in the southern hemisphere  deflects it to the left. The Coriolis  force magnitude  changes depending  on the distance from equator.

There are large-scale atmospheric currents known as tradewinds, which result from temperature differences between the equator and the poles and the Coriolis effect.
A simple model for the tradewind current was studied by Arnold in \cite{Arnold} on a torus and by Lukatskii in \cite{Luk} on the sphere. This model is described by the velocity field $\mu(-y\partial_x +x\partial_y)$ (see figure 1), which is induced by the Coriolis force and uneven heating of the sphere.
\begin{figure}[!h]
\centering
  \includegraphics[width=7cm]{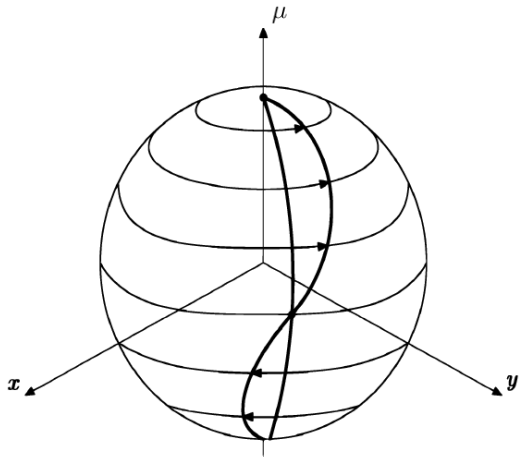}
  \caption{The tradewind current}\label{fig:1}
  \label{fig:boat1}
\end{figure}

Quasi-geostrophic motion refers to a state where the Coriolis force is of the same order of magnitude as the horizontal pressure gradient. It is important to note that while the vertical pressure gradient is a result of gravity, the horizontal pressure gradient primarily arises from the uneven heating of the earth's surface. The quasi-geostrophic equation in the $\beta$-plane approximation is given by
\begin{equation}\label{eq geodesic-equation beta plane}
\partial_t(\Delta f-\alpha^2 f) +  \{f,\Delta  f\} +\beta f_\lambda  =0.
\end{equation}
(compare with \eqref{eq geodesic-equation of central extension reduced}) assumes that the Coriolis parameter varies linearly with latitude as $f=f_0+\beta\mu$ where $f_0$ and $\beta$ are constants. The parameter (number) $\beta$ is known as the Rossby number. In a more restrictive situation known as the "$f$-plane approximation," we replace the variable $f$ with a constant.

The parameter $\alpha$ in \eqref{eq geodesic-equation beta plane} is a dimensionless parameter called the Froude number. Froude number is defined by speed-length ratio and it is given by
\begin{equation}\label{Froude number}
\alpha:=\frac{U}{\sqrt{gL}}
\end{equation}
where $U$ is the horizontal velocity scale, $L$ is the horizontal length scale, and $g$ is the gravitational constant. In the large scale, when $U \ll L$, the Froude number becomes very small, i.e., $\alpha \ll 1$.

As suggested by Ebin and Preston in \cite{Ebin-Preston-arXiv}, the implementation of the parameter $\alpha$ involves considering the Hopf fibration, with the Coriolis effect being imposed as a central extension. However, an alternative approach is to consider a two-dimensional central extension of $\mathcal{D}^s_{vol}(\Sph^2)$ without the need to deal with the Hopf fibration and the quantomorphisms group \cite{Lee-Pre}.

%
%
%
%
\section{Hopf fibration and quasi-geostrophic equations on $\Sph^2$}
Let $SU(2)$ be the set of all $2\times 2$ complex matrices of the form
$A=\begin{pmatrix}
       z  &       w  \\
-\bar{w}  &  \bar{z}
\end{pmatrix}$
where $z,w\in \mathbb{C}$ and $\textrm{det}(A)=1$. The corresponding Lie algebra $\mathfrak{su}(2)$ is the set of all matrices with the form
$\begin{pmatrix}
 i\alpha  &       \beta+i\gamma  \\
-\beta+i\gamma  &  -i\alpha
\end{pmatrix}$
where $\alpha,\beta,\gamma$ are real numbers. Consider the basis of orthogonal vectors
\begin{equation*}
E_1=\begin{pmatrix}
i  &  0  \\
0  &  -i
\end{pmatrix}, \quad
E_2=\begin{pmatrix}
0  &  1  \\
-1  &  0
\end{pmatrix},\quad
E_3=\begin{pmatrix}
0  &  i  \\
i  &  0
\end{pmatrix}
\end{equation*}
for $\mathfrak{su}(2)$ with the (Berger) metric coefficients $g_{11}=\alpha^2$, $g_{22}=1$ and $g_{33}=1$. We identify $SU(2)$ with the 3-dimensional unit sphere $\Sph^3(1)$. The Hopf fibration  is a Riemannian submersion defined  by
\begin{eqnarray*}
H:\Sph^3(1)   &\longrightarrow&    \Sph^2(\frac{1}{2})\subseteq \mathbb{R}\oplus\mathbb{C}\\
(z,w)      &\longmapsto&  \big(\frac{1}{2}(|z|^2-|w|^2),z\bar{w}\big)
\end{eqnarray*}
and the action $A$ of $\Sph^1$ on $SU(2)$ is given by $e^{i\theta}(z,w)=(e^{i\theta}z  ,e^{i\theta}w)$.
In this case $H$ delivers a fibration with the typical fiber $\Sph^1$. For the left invariant vector fields generated by $\{E_i\}_{1\leq i\leq3}$ we consider the Lie bracket defined by $[E_i,E_{i+1}]=2E_{i+2}$ which the indices are considered mod $3$. We consider $E_1$ as the Reeb vector field and the associated  1-form is denoted by $\theta=\omega_1$. For the associated dual basis $\{\omega_i\}_{1\leq i\leq 3}$ we have $d\theta=2\omega_2\wedge\omega_3$ (see e.g. \cite{Petersen}). For $X,Y\in T_x\Sph^2$ the $2$-form $\omega_x(X,Y)=\textrm{det}(x,X,Y)$ is the standard symplectic structure on $\Sph^2$ and  the pullback of $\omega$ is (a constant multiple of) $d\theta$. In fact Hopf fibration is an example of Boothby-Wang fibration. Ebin and Preston in \cite{Ebin-Preston, Ebin-Preston-arXiv} using the Riemannian geometry of the quantomorphism group of $\Sph^3$ and the Boothby-Wang fibration proposed an approach to study the quasi-geostrophic equations on $\Sph^2$ as we review here.

The qunatomorphism group $\cD^s_q(\Sph^3)=\{\eta\in\cD^s(\Sph^3);~\eta^*\theta=\theta\}$ for $s>\frac{3}{2}+1$, admits a smooth manifold structure (Corollary 2.7 \cite{Ebin-Preston-arXiv}). The tangent space is
\begin{equation*}
\cg:=T_e \cD^s_q(\Sph^3)=\{ S_\theta f;~f\in \mathcal{F}_{E_1}^{s+1}(\Sph^3,\mathbb{R})\}
\end{equation*}
where $\mathcal{F}_{E_1}^{s+1}(\Sph^3,\mathbb{R})=\{f:\Sph^3\longrightarrow \mathbb{R};~ \textrm{f is~} H^{s+1} \textrm{~and~} E_1(f)=0\}$ and the operator $S_\theta$ is defined by the following properties
\begin{equation*}
u=S_\theta f \quad \Longleftrightarrow  \quad \theta(u)=f~~ \textrm{and}~~i_ud\theta=-df.
\end{equation*}
In this case $S_\theta f=fE_1   -  \frac{1}{2}(E_3f)E_2   +  \frac{1}{2}(E_2f)E_3$ and
\begin{equation*}
\Delta_\theta f=\alpha^2  -  \frac{1}{4}E_2^2  -  \frac{1}{4}E_3^2
\end{equation*}
where $\Delta_\theta=S_\theta^* S_\theta$ is the contact Laplacian and $S_\theta^*$ is the adjoint of the $S_\theta$ with respect to the right invariant $\cL^2$-metric
\begin{equation*}
\ll S_\theta f, S_\theta g \gg_\cg=\int_{\Sph^3}<S_\theta f,S_\theta g>d\mu.
\end{equation*}
(see also \cite{Ebin-Preston-arXiv}, pp. 20-23). For any $f,g\in\mathcal{F}_{E_1}^{s+1}(\Sph^3,\mathbb{R})$ the contact Poisson bracket is defined by the relation $\{f,g\}=(S_\theta f) g$. In this case we have $S_\theta\{f,g\}=[S_\theta f,S_\theta g]$ which means that $S_\theta$ is a Lie algebra morphism.
%
%

The following lemma is true for any contact manifold $M$ and as an special case for $M=\Sph^3$. The Riemannian metric on $M$ is denoted by $<,>$.
\begin{la}
The coadjoint operator $ad^*_u:\cg\rightarrow\cg$ is given by
\begin{equation}\label{eq ad* on quantomorphism}
ad^*_uv=S_\theta\Delta_\theta^{-1} \{f,\Delta_\theta g\}
\end{equation}
where $u=S_\theta f$, $v=S_\theta g$.
\end{la}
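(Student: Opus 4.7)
My plan is to unravel the defining relation of the coadjoint operator with respect to the right-invariant $L^2$ metric on $\cg$, transporting every quantity to the function-theoretic side through the morphism $S_\theta$. Fix a test element $w=S_\theta k\in\cg$ and write $ad^*_u v=S_\theta h$ for an unknown $h\in\cF_{E_1}^{s+1}(\Sph^3,\R)$. The defining identity
\begin{equation*}
\langle\langle ad^*_u v,\,w\rangle\rangle_\cg \;=\; \langle\langle v,\,[u,w]\rangle\rangle_\cg,
\end{equation*}
taken with the sign convention appropriate to the right-invariant Lie algebra, is what I have to rewrite purely in terms of $f,g,k$.

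Two ingredients do most of the work. First, $\Delta_\theta=S_\theta^*S_\theta$ together with the definition of the $L^2$ pairing on $\cg$ gives
\begin{equation*}
\langle\langle S_\theta\phi,\,S_\theta\psi\rangle\rangle_\cg \;=\; \int_{\Sph^3}(\Delta_\theta\phi)\,\psi\,d\mu
\end{equation*}
for any admissible $\phi,\psi$, so both sides of the coadjoint identity reduce to ordinary scalar integrals. Second, the fact that $S_\theta$ is a Lie algebra morphism converts $[S_\theta f,S_\theta k]$ into $S_\theta\{f,k\}$. After these substitutions the identity collapses to
\begin{equation*}
\int_{\Sph^3}(\Delta_\theta h)\,k\,d\mu \;=\; \int_{\Sph^3}(\Delta_\theta g)\,\{f,k\}\,d\mu.
\end{equation*}

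The decisive step is an integration by parts on the right-hand side. By the very definition of the contact Poisson bracket, $\{f,k\}=(S_\theta f)(k)=u(k)$, and since $u$ is tangent to $\cD^s_q(\Sph^3)\sub\cD^s_\mu(\Sph^3)$ it is divergence-free on the closed manifold $(\Sph^3,d\mu)$; hence
\begin{equation*}
\int_{\Sph^3}(\Delta_\theta g)\,u(k)\,d\mu \;=\; -\int_{\Sph^3}u(\Delta_\theta g)\,k\,d\mu \;=\; -\int_{\Sph^3}\{f,\Delta_\theta g\}\,k\,d\mu.
\end{equation*}
Absorbing the sign into the chosen orientation of $ad^*$ and letting $k$ range over a dense set of $E_1$-invariant test functions forces $\Delta_\theta h=\{f,\Delta_\theta g\}$. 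Since $\Delta_\theta$ is self-adjoint and positive on $\cF_{E_1}$ (the $\alpha^2\cdot\id$ term shifts its spectrum away from zero), it is invertible, and $h=\Delta_\theta^{-1}\{f,\Delta_\theta g\}$, which is exactly \eqref{eq ad* on quantomorphism}.

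The main subtlety I anticipate is checking that $\{f,\Delta_\theta g\}$ itself lies in $\ker E_1$, so that $\Delta_\theta^{-1}$ is meaningfully defined on it and the result lands back in $\cg$ after applying $S_\theta$. This follows from the derivation property $E_1\{f,g\}=\{E_1 f,g\}+\{f,E_1 g\}$ combined with $E_1f=E_1g=0$, together with the fact that $\Delta_\theta$ commutes with $E_1$; a short computation from $[E_1,E_2]=2E_3$ and $[E_1,E_3]=-2E_2$ shows $[E_1,E_2^2+E_3^2]=0$. Once this is in place, the integration by parts is classical for $s>\tfrac52$, and no further obstacle remains.
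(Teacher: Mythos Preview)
Your argument is correct and is the standard derivation. Note, however, that the paper itself does not supply a proof of this lemma: it simply refers the reader to \cite{Ebin-Preston-arXiv}. Your computation is essentially the one carried out there---express both sides of the defining identity for $ad^*$ via $\Delta_\theta=S_\theta^*S_\theta$, use that $S_\theta$ intertwines the Poisson and Lie brackets, integrate by parts using that $S_\theta f$ is divergence-free, and invert $\Delta_\theta$. The one place worth tightening is the sign bookkeeping: in the paper's convention $ad_u w=-[u,w]$ (this is visible in the computation of $\hat{ad}^*$ immediately after the lemma), so the defining relation reads $\ll ad^*_u v,w\gg_\cg=-\ll v,[u,w]\gg_\cg$; combined with the minus sign from the integration by parts this gives $\Delta_\theta h=\{f,\Delta_\theta g\}$ directly, with no need to ``absorb'' anything.
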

For the proof see \cite{Ebin-Preston-arXiv}.\\
%
%
As a result, the Euler-Arnold equation on $\cD^s_q(M)$ is given by
\begin{eqnarray*}
0=\partial_tu+ad^*_uu   &=&    \partial_t S_\theta f + ad^*_{S_\theta f}S_\theta f\\
&=&  \partial_t S_\theta f + S_\theta\Delta_\theta^{-1}\{f,\Delta_\theta f\}\\
&=& S_\theta \Big(  \partial_t   f +  \Delta_\theta^{-1} \{f,\Delta_\theta f\}  \Big)
\end{eqnarray*}
which implies that  $\partial_t   f +  \Delta_\theta^{-1} \{f,\Delta_\theta f\}=0$. Now we apply the contact Laplacian on both sides of the last equation and we get
\begin{equation}\label{eq geodesic eq on D_q with L^2 metric}
\partial_t\Delta_\theta f +  \{f,\Delta_\theta f\}=0.
\end{equation}
See also \cite{Ebin-Preston-arXiv} theorem 4.1 for a different approach to this equation.
%
%
%
\subsection{Central extension of the quantomorphism group}
Following \cite{Ebin-Preston} and \cite{Ebin-Preston-arXiv} we consider the central extension of the Lie Algebra $\cg=T_e\cD^s_q(M)$ with $\mathbb{R}$ which is denoted  by $\hat\cg=\cg\ltimes_\Omega\mathbb{R}$. For $u=S_\theta f$ and $v=S_\theta g$ in $\cg$ the map $ \Omega(u,v)=\int_{\Sph^3}\phi\{f,g\}d\mu$ and $\phi:M\longrightarrow\mathbb{R}$ is a known function. Recall that for any $(u,a),(v,b)\in \cg\ltimes_\Omega\mathbb{R}$  the Lie bracket is defined by
\begin{equation*}
\big[  (u,a),(v,b)\big]=\big(  S_\theta\{f,g\}  ,  \Omega(u,v)  \big)
\end{equation*}
and the inner product is
\begin{equation*}
\ll  (u,a),(v,b)  \gg_{\hat{\cg}}=\int_{\Sph^3} <S_\theta f  ,  S_\theta g>d\mu  + ab.
\end{equation*}
First we calculate the operator $T:\cg\longrightarrow \cg$ defined by the relation $\ll Tu,v \gg=\Omega(u,v)$. Using the fact that $\int_{\Sph^3}\phi\{f,g\}d\mu = \int_{\Sph^3} \{\phi,f\}  g  d\mu$ we have
\begin{eqnarray*}
\Omega(u,v)   &=&   \int_{\Sph^3}\phi\{f,g\}d\mu\\
&=&    \int_{\Sph^3} \{\phi,f\}  g  d\mu\\
&=&    \int_{\Sph^3} S_\theta^*S_\theta\Delta_\theta^{-1}\{\phi,f\}  g  d\mu\\
&=&    \int_{\Sph^3} <  S_\theta\Delta_\theta^{-1}\{\phi,f\}  ,   S_\theta g > d\mu\\
&=&   \ll  S_\theta\Delta_\theta^{-1}\{\phi,f\}  ,   S_\theta g \gg_\cg
\end{eqnarray*}
which implies that $T(S_\theta f)=S_\theta\Delta_\theta^{-1}\{\phi,f\}$.\\[2mm]
%
%
%
Following \cite{Viz1}, for $\hat{ad}_{(u,a)}^*:\hat{\cg}\longrightarrow \hat{\cg}$ we have
\begin{eqnarray*}
\ll  \hat{ad}_{(u,a)}^* (v,b)  ,  (w,c) \gg_{\hat{\cg}}    &=&    \ll   (v,b)  , \hat{ad}_{(u,a)} (w,c) \gg_{\hat{\cg}}   \\
&=&  -  \ll   (v,b)  , [(u,a), (w,c)]_{\hat{\cg}} \gg_{\hat{\cg}}\\
&=&    \ll   (v,b)  , (ad_uw, -\Omega(u,w)) \gg_{\hat{\cg}}\\
&=&       \ll v, ad_uw\gg_\cg   - b\Omega(u,w) \\
&=&      \ll ad^*_uv  ,   w  \gg_\cg   - \ll b T u  ,  w  \gg_\cg   \\
&=&  \ll  ad^*_uv  - b Tu   ,   w  \gg_\cg.
\end{eqnarray*}
As a consequence  $  \hat{ad}_{(u,a)}^* (v,b)=(ad^*_uv  - b Tu,0)\in\hat{\cg}$  and  for the curve $(u,a):(-\epsilon,\epsilon)\longrightarrow \hat{\cg}$  the Euler-Arnold  equation is given by
\begin{equation*}
\left\{ \begin{array}{ll}  \partial_tu  +  ad^*_uu  -  a(t)Tu  =  0   \\
\partial_ta(t)=0
\end{array}\right.
\end{equation*}
The second equation implies that $a(t)=a$ is constant and following the procedure for derivation of equation (\ref{eq geodesic eq on D_q with L^2 metric}) for
$u=S_\theta f$ we have
\begin{equation}\label{eq geo-eq of cent-ext pre}
\partial_t\Delta_\theta f  +  \{f,\Delta_\theta f\}  -  a\{\phi,f\}  =  \partial_t\Delta_\theta f  +  \{f,\Delta_\theta f  + a\phi\} =  0
\end{equation}
%
%
\begin{rem}
Since for any $f\in\mathcal{F}_{E_1}^{s+1}(\Sph^3,\mathbb{R})$ we have $E_1f=0$ basically the function $f$ is constant in the direction of the Reeb field  (let's say with respect to the last variable in local chart of $\Sph^3$) then integration on $\Sph^3$ reduces to integration on the symplectic quotient $\Sph^2$. In this case  $\Delta_\theta=\alpha^2-\Delta$ where $\Delta$ is the usual Laplacian $\Sph^2$ and $S_\theta$ will reduced to the skew-gradient. Moreover the quasi-geostrophic  equations (\ref{eq geodesic eq on D_q with L^2 metric}) and (\ref{eq geo-eq of cent-ext pre}) are given by
\begin{equation}\label{eq geodesic eq on D_q with L^2 metric reduced}
\partial_t(\Delta f-\alpha^2 f) +  \{f,\Delta  f\}=0
\end{equation}
and
\begin{equation}\label{eq geodesic-equation of central extension reduced general}
\partial_t(\Delta f-\alpha^2 f) +  \{f,\Delta  f\}  - a\{f, \phi\}=0.
\end{equation}
respectively (compare with \cite{Lee-Pre} Corollary 5). In meteorology the function $\phi$ is locally given by $\phi(\lambda,\mu)=\mu$  and $\{f,g\}$ is the Poisson bracket which in Darboux coordinates resembles
\begin{equation}\label{poisson bracket preston}
\{f,g\}=   \frac{\partial f}{\partial \lambda}\frac{\partial g}{\partial \mu}   -   \frac{\partial f}{\partial \mu} \frac{\partial g}{\partial \lambda} \end{equation}
As a result we deal with the following equation (see also \cite{Skiba} for the case that $\alpha^2=0$.)
\begin{equation}\label{eq geodesic-equation of central extension reduced}
\partial_t(\Delta f-\alpha^2 f) +  \{f,\Delta  f-a\mu\}  =0.
\end{equation}
\end{rem}
We will use the symbol "$\sgrad$" for the reduced case of $S_\theta$ on  $\mathbb{S}^2$ which  stands for the skew-gradient.
%
%
%
%
%
%
%
%
%
%
%
\section{Curvature of the central extension of the quantomorphism group}\label{section sec curv}
In this section first, we briefly introduce spherical harmonics and the corresponding structure constants from \cite{Mess, Arak-Sav}. Then we compute the cocycle $\Omega$ and the operator $T$, coadjoint operator and the covariant derivative according to the structure constants. Theorem \ref{thm curv e} is the main goal of this part and presents a formula which enables us to compute the curvature at the presence of Coriolis force and Froude number. It is tried to present the results in a way that after restricting to the non-rotating case, the results of \cite{Arak-Sav} appear.\\
Consider the complex spherical harmonic $Y_{lm}:\Sph^2\rightarrow \C$ where
\begin{equation*}
Y_{lm}(\lambda,\mu)=C^m_l P^{|m|}_l(\mu)e^{im\lambda}
\end{equation*}
and for the integers  $0\leq |m|\leq l$ the coefficient  $C^m_l=(-1)^{m}\sqrt{   \frac{2l+1}{4\pi}\frac{(l-|m|)!}{(l+|m| )!}  }$. Following the notations of \cite{Mess} the associated Legendre polynomial is given by
\begin{equation*}
P^{|m|}_l(\mu)=\frac{  {(1-\mu^2)}^{\frac{m}{2}}   }{2^ll!}  \frac{d^{l+|m|}}{d\mu^{l+|m|}}  (\mu^2-1)^l
\end{equation*}
with $-1\leq \mu\leq 1$ and $0\leq\lambda\leq 2\pi$. Moreover the complex conjugation is given by
$$Y_{lm}^\star=(-1)^mY_{l-m}.$$

Now suppose that $e_{l_km_k}=\sgrad(Y_{l_km_k})$ for $k\in\{1,2\}$ and $f,g:\Sph^2\longrightarrow \R$ be differentiable. Then due to the facts
\begin{equation}\label{inner pproduct}
\langle   Y_{l_1m_1}  ,   Y_{l_2m_2}    \rangle  = \int_{-1}^{1}\int_0^{2\pi} Y_{l_1m_1}  Y_{l_2m_2} d\lambda d\mu  =  (-1)^{m_1} \delta^{l_1}_{l_2}\delta^{m_1}_{-m_2},
\end{equation}
\begin{equation*}
\ll \sgrad f,\sgrad g\gg_\cg=\langle  (\alpha^2 -\Delta)f ,  g \rangle
\end{equation*}
and $\Delta Y_{lm}=-l(l+1)Y_{lm}$ we have
\begin{eqnarray*}
\ll  (e_{l_1m_1},a)  ,   (e_{l_2m_2},b)    \gg_{\hat\cg}    & = &  \ll  e_{l_1m_1}  ,   e_{l_2m_2}    \gg_{\cg}   +  ab\\
& = &  \ll  \sgrad  (Y_{l_1m_1})  ,   \sgrad(Y_{l_2m_2})    \gg_{\cg}   +  ab\\
& = &  \langle  (\alpha^2 -\Delta) (Y_{l_1m_1})  ,   Y_{l_2m_2}    \rangle   +  ab\\
& = &  \langle   (\alpha^2 +  l_1(l_1+1))  Y_{l_1m_1}  ,   Y_{l_2m_2}    \rangle   +  ab\\
& = &  \big(  \alpha^2 +  l_1(l_1+1) \big)   (-1)^{m_1} \delta^{l_1}_{l_2}\delta^{m_1}_{-m_2}   +  ab.\\
\end{eqnarray*}
In the case that $\alpha=a=b=0$ then the above equality reduces to equation (11) in \cite{Arak-Sav}. Let
\begin{equation}\label{structure constants}
\{Y_{l_1m_1},Y_{l_2m_2}\}:=G^{l_3m_3}_{l_1m_1l_2m_2} Y_{l_3m_3}
\end{equation}
or equivalently $[e_{l_1m_1},e_{l_2m_2}]:=G^{l_3m_3}_{l_1m_1l_2m_2} e_{l_3m_3} $ where we used the Einstein summation convention.
%
%
\begin{la}\label{lemma  Te}
Let $v=e_{l_2m_2}$ then
\begin{equation}\label{eq Te^m_l}
Tv=\sqrt{\frac{4\pi}{3}} \frac{G^{l_3m_3}_{1~0 ~l_2 m_2}}{\alpha^2+l_3(l_3+1)}    e_{l_3m_3}   :=     D^{l_3m_3}_{1~0 ~l_2 m_2}e_{l_3m_3}.
\end{equation}
\end{la}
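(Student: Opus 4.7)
The plan is to unpack the operator $T$ already characterized above, namely $T(S_\theta f) = S_\theta \Delta_\theta^{-1}\{\phi,f\}$, and to specialise it to $f = Y_{l_2 m_2}$ in the reduced situation on $\Sph^2$, where $\phi(\lambda,\mu) = \mu$ and $\Delta_\theta = \alpha^2 - \Delta$.

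The first step is to rewrite $\phi$ in the spherical-harmonic basis. From $P^{0}_{1}(\mu) = \mu$ and $C^{0}_{1} = \sqrt{3/(4\pi)}$ one reads off $Y_{10}(\lambda,\mu) = \sqrt{3/(4\pi)}\,\mu$, so $\phi = \sqrt{4\pi/3}\,Y_{10}$. This is the unique normalisation constant in the argument, and it is precisely the factor that surfaces in the claimed formula.

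Next, I would substitute $v = e_{l_2 m_2} = S_\theta Y_{l_2 m_2}$ into $Tv = S_\theta \Delta_\theta^{-1}\{\phi, Y_{l_2 m_2}\}$ and expand the Poisson bracket via the structure-constant identity (\ref{structure constants}). This rewrites $\{\phi, Y_{l_2 m_2}\}$ as $\sqrt{4\pi/3}\,G^{l_3 m_3}_{1\,0\,l_2 m_2}\,Y_{l_3 m_3}$ under Einstein summation. Inverting $\Delta_\theta$ term-by-term using $\Delta_\theta Y_{l_3 m_3} = (\alpha^2 + l_3(l_3+1))\,Y_{l_3 m_3}$ simply divides each term by its eigenvalue, and applying $S_\theta$ to $Y_{l_3 m_3}$ restores $e_{l_3 m_3}$, yielding the stated formula.

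There is no genuine obstacle: once $\phi$ is identified with a scalar multiple of $Y_{10}$, every subsequent step is a direct substitution into identities already recorded earlier in the paper (the expression for $T$, the spectral action of $\Delta_\theta$ on spherical harmonics, and the definition of the structure constants). The only point requiring care is bookkeeping of the factor $\sqrt{4\pi/3}$ and of the index convention under Einstein summation, since a slip there would perturb the coefficient $D^{l_3 m_3}_{1\,0\,l_2 m_2}$ by a constant.
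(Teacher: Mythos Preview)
Your proposal is correct. Both you and the paper begin by identifying $\phi=\mu=\sqrt{4\pi/3}\,Y_{10}$ and then use the formula $T(S_\theta f)=S_\theta\Delta_\theta^{-1}\{\phi,f\}$ with $f=Y_{l_2m_2}$. The difference is purely in execution: you apply $\Delta_\theta^{-1}$ directly term-by-term to the eigenfunction expansion $\{\phi,Y_{l_2m_2}\}=\sqrt{4\pi/3}\,G^{l_3m_3}_{1\,0\,l_2m_2}Y_{l_3m_3}$, using that each $Y_{l_3m_3}$ is an eigenfunction of $\Delta_\theta$ with eigenvalue $\alpha^2+l_3(l_3+1)$. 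The paper instead pairs $Tv$ against an arbitrary $e_{l_3m_3}$ via the inner product $\ll\cdot,\cdot\gg_\cg$, using the identity $\ll\sgrad\Delta_\theta^{-1}h,\sgrad g\gg_\cg=\langle h,g\rangle$ to strip the $\Delta_\theta^{-1}$, and then matches coefficients. Your route is more direct and avoids the detour through the inner product; the paper's route has the mild advantage of making explicit how the coefficients $D^{l_3m_3}_{1\,0\,l_2m_2}$ are read off via duality, which is the pattern reused later (e.g.\ in Proposition~\ref{Prop hat ad^*_e}). Either way the content is the same.
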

%
%
\begin{proof}
Note that
$\phi=\mu=\sqrt{\frac{4\pi}{3}}Y_{10}$.
Following the formalism of \cite{Arak-Sav}
\begin{equation*}
\ll Tv,e_{l_3m_3} \gg_\cg =(-1)^{m_3} D^{l_3 -m_3}_{1~0~l_2m_2}  \Longleftrightarrow Tv= D^{l_3 m_3}_{1~0~l_2m_2}e_{l_3 m_3}.
\end{equation*}
Then we have
\begin{eqnarray*}
\ll Tv,e_{l_3 m_3} \gg_\cg   &=&   \ll   \sgrad\Delta_\theta^{-1} \{  \phi, Y_{l_2m_2}  \}  ,  \sgrad(Y_{l_3 m_3})   \gg_\cg \\
&=&   \sqrt{  \frac{4\pi}{3}  }   \langle    \{Y_{10}, Y_{l_2 m_2} \}  ,  Y_{l_3 m_3}   \rangle   \\
&=&   \sqrt{  \frac{4\pi}{3}  }  G^{l_1m_1}_{1~0~l_2m_2} \langle      Y_{l_1 m_1}  ,  Y_{l_3m_3}   \rangle   \\
&=&   \sqrt{  \frac{4\pi}{3}  }     (-1)^{m_3}   G^{l_1m_1}_{1~0~l_2m_2}    \delta^{m_1}_{-m_3}\delta^{l_1}_{l_3}\\
&=&   \sqrt{  \frac{4\pi}{3}  }  (-1)^{m_3}      G^{l_3-m_3}_{1~0~l_2m_2} \\
\end{eqnarray*}
On the other hand we have
\begin{eqnarray*}
\ll Tv,e_{l_3 m_3} \gg_\cg   &=&   \ll   D^{lm}_{1~0 ~l_2 m_2}e_{lm}  ,    e_{l_3 m_3}   \gg_\cg \\
&=&     D^{lm}_{1~0 ~l_2 m_2}   \ll   e_{lm}  ,    e_{l_3 m_3}   \gg_\cg \\
&=&       D^{lm}_{1~0 ~l_2 m_2}   (\alpha^2+l_3(l_3+1))   \langle  Y_{lm}  ,    Y_{l_3 m_3}   \rangle \\
&=&       D^{lm}_{1~0 ~l_2 m_2}  (-1)^{m_3} (\alpha^2+l_3(l_3+1))   \delta_l^{l_3}\delta_m^{-m_3} \\
&=&       D^{l_3-m_3}_{1~0 ~l_2 m_2}  (-1)^{m_3} (\alpha^2+l_3(l_3+1))  \\
\end{eqnarray*}
The last two equations imply that  $D^{l_3m_3}_{1~0 ~l_2 m_2}=  \sqrt{  \frac{4\pi}{3}  }  \frac{G^{l_3m_3}_{1~0~l_2m_2}  }{\alpha^2+l_3(l_3+1)}$.
\end{proof}
%
%
%

We will use the real structure constants
\begin{equation}\label{real structure constants}
G^{l_3m_3}_{l_1m_1l_2m_2}=-i(-1)^{m_3}g^{l_3-m_3}_{l_1m_1l_2m_2}
\end{equation}
and equalities
\begin{equation}\label{eq symmetry}
g^{l_3m_3}_{l_1m_1l_2m_2}=g^{l_2m_2}_{l_3m_3l_1m_1 }=g^{l_1m_1}_{l_2m_2l_3m_3}
\end{equation}
and
\begin{equation}\label{eq g 10}
g^{10}_{l_1m_1l_2m_2}=(-1)^{m_1}m_1\sqrt{\frac{3}{4\pi}} \delta^{l_1}_{l_2} \delta^{m_1}_{-m_2}
\end{equation}
from \cite{Arak-Sav}.
%
%
\begin{rem}
When it is needed, one can choose the following alternative for $Tv$. Using equations \eqref{real structure constants}, \eqref{eq symmetry} and \eqref{eq g 10} we have
\begin{equation*}
G^{l_3m_3}_{1~0~l_2m_2}   = -i(-1)^{m_3}g^{l_3-m_3}_{1~0 ~ l_2m_2} =  -i(-1)^{m_3}g^{1~0}_{ l_2m_2 l_3-m_3}     =-im_2\sqrt{\frac{3}{4\pi}} \delta^{l_3}_{l_2} \delta^{m_3}_{m_2}
\end{equation*}
which implies that
\begin{equation*}
Tv=\frac{-im_2}{\alpha^2+l_2(l_2+1)}e_{l_2m_2}.
\end{equation*}

\end{rem}
%
%
%
%
\begin{cor}\label{Cor Omega(e,e)}%
With the above notations we have
\begin{eqnarray*}
\Omega(e_{l_2m_2},e_{l_3m_3})  =  \ll Tv,e_{l_3 m_3} \gg_\cg   &=&   \sqrt{  \frac{4\pi}{3}  }  (-1)^{m_3}      G^{l_3-m_3}_{1~0~l_2m_2} \\
&=&   \sqrt{  \frac{4\pi}{3}  }  (-i)      g^{l_3m_3}_{1~0~l_2m_2}\\
&=&   \sqrt{  \frac{4\pi}{3}  }  (-i)      g^{10}_{l_2m_2l_3m_3}\\
&=&   \sqrt{  \frac{4\pi}{3}  }  (-i)(-1)^{m_2}m_2\delta^l_{l_3}\delta^{m_2}_{-m_3} \sqrt{  \frac{3}{4\pi}  }\\
&=&   -i(-1)^{m_2}m_2\delta^{l_2}_{l_3}\delta^{m_2}_{-m_3}.
\end{eqnarray*}
\end{cor}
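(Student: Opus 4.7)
The plan is to verify each equality in the displayed chain by substituting one of the identities already introduced, so the proof is essentially bookkeeping rather than new computation.

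First, the defining relation $\ll T u,v \gg_\cg = \Omega(u,v)$ immediately gives the first equality (with $u = e_{l_2 m_2}$ and $v = e_{l_3 m_3}$). The second equality, $\ll T v,e_{l_3 m_3}\gg_\cg = \sqrt{4\pi/3}\,(-1)^{m_3}\,G^{l_3-m_3}_{1\,0\,l_2 m_2}$, was already derived inside the proof of Lemma \ref{lemma Te}, so I would simply cite that line rather than redo the integration against $Y_{l_3 m_3}$.

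Next I would convert the complex structure constant $G$ into the real structure constant $g$ via \eqref{real structure constants}. Substituting $m_3 \mapsto -m_3$ in that identity and noting $(-1)^{-m_3} = (-1)^{m_3}$ for integer $m_3$, we obtain $G^{l_3-m_3}_{1\,0\,l_2 m_2} = -i(-1)^{m_3} g^{l_3 m_3}_{1\,0\,l_2 m_2}$. Multiplying by the prefactor $\sqrt{4\pi/3}\,(-1)^{m_3}$ in front kills the two factors of $(-1)^{m_3}$ and yields the third line, $\sqrt{4\pi/3}\,(-i)\, g^{l_3 m_3}_{1\,0\,l_2 m_2}$.

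Then I would invoke the cyclic symmetry \eqref{eq symmetry} to rewrite $g^{l_3 m_3}_{1\,0\,l_2 m_2} = g^{1\,0}_{l_2 m_2\, l_3 m_3}$, giving the fourth line. Finally I would apply the explicit evaluation \eqref{eq g 10} with $(l_1,m_1) \leftrightarrow (l_2,m_2)$ and $(l_2,m_2) \leftrightarrow (l_3,m_3)$ to get $g^{1\,0}_{l_2 m_2\, l_3 m_3} = (-1)^{m_2} m_2 \sqrt{3/(4\pi)}\,\delta^{l_2}_{l_3}\delta^{m_2}_{-m_3}$. The factor $\sqrt{4\pi/3}\cdot\sqrt{3/(4\pi)}=1$ collapses, producing the claimed final expression $-i(-1)^{m_2} m_2\,\delta^{l_2}_{l_3}\delta^{m_2}_{-m_3}$.

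There is no real obstacle here; the only trap is keeping track of the sign conventions when flipping $m_3$ to $-m_3$ and when applying the cyclic symmetry (which is three-fold and orientation-dependent), so I would be careful to match the index order in \eqref{real structure constants} and \eqref{eq symmetry} exactly as stated in \cite{Arak-Sav} before pushing the calculation through.
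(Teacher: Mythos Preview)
Your proposal is correct and matches the paper's approach exactly: the corollary is stated as a displayed chain of equalities with no separate proof, and each step uses precisely the identity you name (the defining relation of $T$, the computation inside Lemma~\ref{lemma  Te}, then \eqref{real structure constants}, \eqref{eq symmetry}, and \eqref{eq g 10} in that order). Your handling of the sign when replacing $m_3$ by $-m_3$ in \eqref{real structure constants} and of the cyclic permutation in \eqref{eq symmetry} is also correct.
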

%
%
\begin{prop}\label{Prop hat ad^*_e}
For ${(e_{l_km_k},a_k)}\in \hat{\cg}$, $k\in\{1,2\}$ we have
\begin{equation}
\hat{ad}^*_{(e_{l_2m_2},a_2)}(e_{l_1m_1},a_1)=\Big(  A^{l_3m_3}_{l_1m_1a_1 l_2m_2a_2}e_{l_3m_3},0\Big)
\end{equation}
where
\begin{equation}\label{A eq}
A^{l_3m_3}_{l_1m_1a_1 l_2m_2a_2}=-\frac{ \alpha^2+l_1(l_1+1) }{\alpha^2+l_3(l_3+1)}G^{l_3m_3}_{l_1m_1l_2m_2}   -  a_1  D^{l_3m_3}_{1~0~l_2m_2}.
\end{equation}
\end{prop}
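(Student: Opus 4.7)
The plan is to invoke the general formula for the extended coadjoint operator
$$\hat{ad}^*_{(u,a)}(v,b) = (ad^*_u v - b\,Tu,\,0)$$
which was established right before Remark~2 in the excerpt, and then evaluate its two summands on the spherical-harmonic generators. Setting $u = e_{l_2m_2}$, $a = a_2$, $v = e_{l_1m_1}$, $b = a_1$, one sees immediately that the central component vanishes and that $a_2$ drops out entirely; only the coadjoint term $ad^*_{e_{l_2m_2}} e_{l_1m_1}$ and the correction $-a_1\, Te_{l_2m_2}$ remain to be computed.

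First I would handle the second summand using Lemma~\ref{lemma Te}, which directly gives
$$-a_1\, Te_{l_2m_2} = -a_1\, D^{l_3m_3}_{1~0~l_2m_2}\, e_{l_3m_3},$$
recovering the rightmost piece of the claimed $A^{l_3m_3}_{l_1m_1 a_1 l_2m_2 a_2}$. For the coadjoint term I would apply equation~\eqref{eq ad* on quantomorphism}, namely $ad^*_u v = S_\theta \Delta_\theta^{-1}\{f, \Delta_\theta g\}$, with $f = Y_{l_2m_2}$ and $g = Y_{l_1m_1}$. Using the eigenvalue identity $\Delta_\theta Y_{lm} = (\alpha^2 + l(l+1))Y_{lm}$, the bilinearity of the Poisson bracket to pull the eigenvalue $\alpha^2 + l_1(l_1{+}1)$ outside, and the expansion \eqref{structure constants} for $\{Y_{l_2m_2}, Y_{l_1m_1}\}$, I would obtain an intermediate expression containing $G^{l_3m_3}_{l_2m_2 l_1m_1}$. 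A final application of $\Delta_\theta^{-1}$ acting on $Y_{l_3m_3}$ divides by $\alpha^2 + l_3(l_3{+}1)$, and $S_\theta$ promotes $Y_{l_3m_3}$ to $e_{l_3m_3}$.

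The only point that requires care is the sign: the claimed formula has $G^{l_3m_3}_{l_1m_1 l_2m_2}$ while the computation natively produces $G^{l_3m_3}_{l_2m_2 l_1m_1}$. I would resolve this with the antisymmetry of the Poisson bracket (equivalently, the antisymmetry of the structure constants in the lower pair of indices), which introduces the minus sign present in \eqref{A eq}. Summing the two contributions then yields the proposition. I do not expect a genuine obstacle here; the argument is a bookkeeping exercise once Lemma~\ref{lemma Te} and \eqref{eq ad* on quantomorphism} are in hand, and the main thing to watch is consistency of index ordering so that the antisymmetry sign is accounted for exactly once.
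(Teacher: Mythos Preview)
Your proposal is correct and yields the same formula, but the route you take for the $ad^*$-piece differs from the paper's. The paper computes $ad^*_{e_{l_2m_2}}e_{l_1m_1}$ via its defining duality: it writes $ad^*_{e_{l_2m_2}}e_{l_1m_1}=a^{l_3m_3}_{l_1m_1l_2m_2}e_{l_3m_3}$, pairs both sides against a generic $e_{lm}$, expands $\ll e_{l_1m_1},[e_{l_2m_2},e_{lm}]\gg_\cg$ in structure constants, and then uses the real-constant identities \eqref{real structure constants} and \eqref{eq symmetry} to convert $G^{l_1-m_1}_{l_2m_2\,l-m}$ into $G^{l_3m_3}_{l_1m_1l_2m_2}$. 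Your argument instead applies the explicit formula \eqref{eq ad* on quantomorphism} directly: the eigenvalue of $\Delta_\theta$ on $Y_{l_1m_1}$ pulls out, $\{Y_{l_2m_2},Y_{l_1m_1}\}$ is expanded via \eqref{structure constants}, $\Delta_\theta^{-1}$ contributes the denominator, and a single use of antisymmetry $G^{l_3m_3}_{l_2m_2l_1m_1}=-G^{l_3m_3}_{l_1m_1l_2m_2}$ fixes the sign. Your path is shorter and avoids the index-symmetry manipulation with \eqref{eq symmetry}; the paper's path has the advantage of never leaving the $e_{lm}$-basis and never invoking $\Delta_\theta^{-1}$ explicitly, which keeps it closer to the Arakelyan--Savvidy formalism it is trying to match.
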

\begin{proof}
We know that $\hat{ad}^*_{(e_{l_2m_2},a_2)}(e_{l_1m_1},a_1)  =  \Big(   {ad}^*_{e_{l_2m_2}}e_{l_1m_1} - a_1Te_{l_2m_2}  ,   0  \Big)$ where the operator $T$ is given by (\ref{eq Te^m_l}). Thus it suffices to compute ${ad}^*_{e_{l_2m_2}}e_{l_1m_1}$. Suppose that
\[{ad}^*_{e_{l_2m_2}}e_{l_1m_1}=a^{l_3m_3}_{l_1m_1l_2m_2}e_{l_3m_3}.\]
 We claim that
\[a^{l_3m_3}_{l_1m_1l_2m_2} = -\frac{\alpha^2 + l_1(l_1+1) }{\alpha^2+l_3(l_3+1)}G^{l_3m_3}_{l_1m_1l_2m_2}.
\]
In fact we have
\begin{eqnarray*}
\ll a^{l_3m_3}_{l_1m_1l_2m_2}e_{l_3m_3}  ,  e_{lm}   \gg_\cg  &=&  \ll {ad}^*_{e_{l_2m_2}}e_{l_1m_1}  ,  e_{lm}   \gg_\cg\\
&=&  \ll e_{l_1m_1}  ,  {ad}_{e_{l_2m_2}}e_{lm}   \gg_\cg\\
&=& - \ll e_{l_1m_1}  ,  [e_{l_2m_2}  ,  e_{lm}]   \gg_\cg\\
&=& - \ll e_{l_1m_1}  ,  G^{l_4m_4}_{l_2m_2lm}e_{l_4m_4}    \gg_\cg.
\end{eqnarray*}
Moreover, the first term of the above equality implies that
\begin{eqnarray*}
\ll a^{l_3m_3}_{l_1m_1l_2m_2}e_{l_3m_3}  ,  e_{lm}   \gg_\cg  &=&   a^{l_3m_3}_{l_1m_1l_2m_2}  \ll e_{l_3m_3}  ,  e_{lm}   \gg_\cg\\
&=&  a^{l_3m_3}_{l_1m_1l_2m_2} (-1)^{m_3} (\alpha^2 +l_3(l_3+1)) \delta^{l}_{l_3}\delta^{m}_{-m_3}.
\end{eqnarray*}
On the other hand
\begin{eqnarray*}
- \ll e_{l_1m_1}  ,  G^{l_4m_4}_{l_2m_2lm}e_{l_4m_4}    \gg_\cg     &=&    - G^{l_4m_4}_{l_2m_2lm}(\alpha^2 +l_1(l_1+1))  \langle Y_{l_1m_1}, Y_{l_4m_4}  \rangle\\
&=&  - (-1)^{m_1} G^{l_4m_4}_{l_2m_2lm}(\alpha^2 +l_1(l_1+1))  \delta^{l_1}_{l_4}  \delta^{m_1}_{-m_4}\\
&=& - (-1)^{m_1} (\alpha^2 +l_1(l_1+1))  G^{l_1-m_1}_{l_2m_2lm}   \\
&=& - (-1)^{m_1}  (\alpha^2 +l_1(l_1+1))  G^{l_1-m_1}_{l_2m_2l_3-m_3}   \\
&=& - (-1)^{m_3}  (\alpha^2 +l_1(l_1+1))  G^{l_3m_3}_{l_1m_1l_2m_2}.
\end{eqnarray*}
For the last line of the above equality we used equations \eqref{real structure constants} and \eqref{eq symmetry}.
The above two equations imply that
\begin{equation}
a^{l_3m_3}_{l_1m_1l_2m_2} = -\frac{   \alpha^2 + l_1(l_1+1) }{\alpha^2 +l_3(l_3+1)}G^{l_3m_3}_{l_1m_1l_2m_2}.
\end{equation}
Finally
\begin{eqnarray*}
\hat{ad}^*_{(e_{l_2m_2},a_2)}(e_{l_1m_1},a_1)  &=&  \Big(   {ad}^*_{e_{l_2m_2}}e_{l_1m_1} - a_1Te_{l_2m_2}  ,   0  \Big)\\
&=&    \Big(   [-\frac{   \alpha^2 + l_1(l_1+1) }{\alpha^2 +l_3(l_3+1)}G^{l_3m_3}_{l_1m_1l_2m_2}      -  a_1D^{l_3m_3}_{1~0~l_2m_2}]e_{l_3m_3}  ,   0  \Big)\\
&:=&  \Big(   A^{l_3m_3}_{l_1m_1a_1l_2m_2a_2}  e_{l_3m_3}  ,   0  \Big)
\end{eqnarray*}
\end{proof}
%
%
%
\begin{prop}\label{Prop hat nabla e}
For $u=e_{l_1m_1}$ and $v=e_{l_2m_2}$ we have
\begin{eqnarray}\label{eqn hat Nabla e}
\hat\nabla_{(u,a_1)}(v,a_2)    &=&   \Big(   \Gamma^{l_3m_3}_{l_1m_1a_1l_2m_2a_2}e_{l_3m_3}     ,     -i\frac{m_1}{2}(-1)^{m_1}\delta^{l_1}_{l_2}\delta^{m_1}_{-m_2}   \Big)
\end{eqnarray}
where
\begin{equation}\label{Gamma eq }
\Gamma^{l_3 m_3 }_{l_1m_1a_1l_2m_2a_2}   =   d_{l_1l_2}^{l_3 }  G^{l_3 m_3 }_{l_1m_1l_2m_2}  -  \frac{1}{2}(a_1 D^{l_3 m_3 }_{1~0~l_2m_2}  +a_2 D^{l_3 m_3 }_{1~0~l_1m_1})
\end{equation}
and
\begin{equation*}
d^{l_3}_{l_1l_2}=\frac{1}{2}\frac{\alpha^2  + l_3(l_3+1)   -l_1(l_1+1)  +l_2(l_2+1)}{     \alpha^2   + l_3(l_3+1)}.
\end{equation*}
\end{prop}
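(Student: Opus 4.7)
The plan is to apply the Arnold--Koszul formula for the Levi--Civita connection of a one-sided invariant metric, in the sign convention fixed by the derivation of $\hat{ad}^*$ just above. Concretely, one uses
\[
\hat\nabla_{(u,a_1)}(v,a_2) = \tfrac{1}{2}\bigl([(u,a_1),(v,a_2)]_{\hat\cg} + \hat{ad}^*_{(u,a_1)}(v,a_2) + \hat{ad}^*_{(v,a_2)}(u,a_1)\bigr),
\]
which gives $\hat\nabla_U U = \hat{ad}^*_U U$ and so matches the Euler--Arnold equation $\partial_t U + \hat{ad}^*_U U=0$ along geodesics. Since the bracket on $\hat\cg$, the cocycle $\Omega$, the operator $T$, and $\hat{ad}^*$ have all been expressed in the basis $\{e_{lm}\}$ via \eqref{structure constants}, Lemma~\ref{lemma  Te}, Corollary~\ref{Cor Omega(e,e)} and Proposition~\ref{Prop hat ad^*_e}, the proof is reduced to substituting $u=e_{l_1m_1}$, $v=e_{l_2m_2}$ and collecting coefficients separately in the $\cg$- and $\mathbb{R}$-slots.

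The $\mathbb{R}$-component is the easy piece: by Proposition~\ref{Prop hat ad^*_e} both $\hat{ad}^*$-terms lie in $\cg\times\{0\}$, so only the bracket contributes, and its $\mathbb{R}$-part is $\Omega(e_{l_1m_1},e_{l_2m_2})$. Corollary~\ref{Cor Omega(e,e)} then yields, after multiplication by $\tfrac{1}{2}$, exactly the claimed value $-i\frac{m_1}{2}(-1)^{m_1}\delta^{l_1}_{l_2}\delta^{m_1}_{-m_2}$.

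For the $\cg$-component I would write the three summands explicitly. The bracket gives $G^{l_3m_3}_{l_1m_1l_2m_2}e_{l_3m_3}$. From \eqref{A eq}, the term $\hat{ad}^*_{(v,a_2)}(u,a_1)$ contributes $\bigl[-\tfrac{\alpha^2+l_1(l_1+1)}{\alpha^2+l_3(l_3+1)}G^{l_3m_3}_{l_1m_1l_2m_2} - a_1 D^{l_3m_3}_{1~0~l_2m_2}\bigr]e_{l_3m_3}$. For the third summand $\hat{ad}^*_{(u,a_1)}(v,a_2)$ one applies \eqref{A eq} again but with the two pairs interchanged; after invoking the antisymmetry $G^{l_3m_3}_{l_2m_2l_1m_1}=-G^{l_3m_3}_{l_1m_1l_2m_2}$, this becomes $\bigl[+\tfrac{\alpha^2+l_2(l_2+1)}{\alpha^2+l_3(l_3+1)}G^{l_3m_3}_{l_1m_1l_2m_2} - a_2 D^{l_3m_3}_{1~0~l_1m_1}\bigr]e_{l_3m_3}$. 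Summing the three $G$-coefficients over the common denominator $\alpha^2+l_3(l_3+1)$ and dividing by $2$ reduces the structure-constant part to
\[
\tfrac{1}{2}\cdot\frac{\alpha^2+l_3(l_3+1)-l_1(l_1+1)+l_2(l_2+1)}{\alpha^2+l_3(l_3+1)}\,G^{l_3m_3}_{l_1m_1l_2m_2} = d^{l_3}_{l_1l_2}\,G^{l_3m_3}_{l_1m_1l_2m_2},
\]
while the $T$-parts accumulate into $-\tfrac{1}{2}\bigl(a_1 D^{l_3m_3}_{1~0~l_2m_2} + a_2 D^{l_3m_3}_{1~0~l_1m_1}\bigr)$, which is precisely \eqref{Gamma eq }.

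The only real obstacle is bookkeeping. Because \eqref{A eq} lists the ``vector being acted on'' first and the ``vector doing the acting'' second, one must carefully permute these roles when computing $\hat{ad}^*_{(u,a_1)}(v,a_2)$, and invoke the antisymmetry of $G$ at the right moment to recast the resulting coefficient with the same index ordering as appears in $[u,v]$. Once this is done the three $(\alpha^2+l_k(l_k+1))/(\alpha^2+l_3(l_3+1))$-weighted terms telescope to the expected $d^{l_3}_{l_1l_2}$ exactly as written, and there is no additional analytic difficulty.
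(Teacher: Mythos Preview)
Your proposal is correct and follows essentially the same approach as the paper: both use the Koszul-type formula $2\hat\nabla_{(u,a_1)}(v,a_2)=[(u,a_1),(v,a_2)]+\hat{ad}^*_{(u,a_1)}(v,a_2)+\hat{ad}^*_{(v,a_2)}(u,a_1)$ (the paper writes the first term as $-\hat{ad}_{(u,a_1)}(v,a_2)$, which agrees with the bracket under its sign convention), substitute the expressions from Lemma~\ref{lemma  Te}, Corollary~\ref{Cor Omega(e,e)} and Proposition~\ref{Prop hat ad^*_e}, and collect coefficients. The only cosmetic difference is that you invoke the antisymmetry $G^{l_3m_3}_{l_2m_2l_1m_1}=-G^{l_3m_3}_{l_1m_1l_2m_2}$ one line earlier than the paper does.
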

\begin{proof}
Using lemma (\ref{lemma  Te}), corollary (\ref{Cor Omega(e,e)}) and proposition (\ref{Prop hat ad^*_e}) we calculate
\begin{eqnarray*}
&&  2\hat\nabla_{(u,a_1)}(v,a_2)  =  - \hat{ad}_{(u,a_1)}(v,a_2)+\hat{ad}^*_{(u,a_1)}(v,a_2)+\hat{ad}^*_{(v,a_2)}(u,a_1)\\
&=&   \Big(    [u,v]     + ad^*_uv  -a_2Tu+ad^*_vu  -a_1Tv   ,    \Omega(u,v) \Big)   \\
&=&     \Big(       \big[    G^{l_3m_3}_{l_1m_1l_2m_2}   -   \frac{\alpha^2 + l_2(l_2+1)}{\alpha^2 + l_3(l_3+1)} G^{l_3m_3}_{l_2m_2l_1m_1}
-   \frac{\alpha^2 +  l_1(l_1+1)}{\alpha^2 + l_3(l_3+1)} G^{l_3m_3}_{l_1m_1l_2m_2}    \\
&&  -   (  a_2D^{l_3m_3}_{1~0~l_1m_1 }  + a_1  D^{l_3m_3}_{1~0~l_2m_2 }  )       \big] e_{l_3m_3}   ~,~      -im_1(-1)^{m_1}\delta^{l_1}_{ l_2} \delta^{m_1}_{-m_2}  \Big)\\
&=&     \Big(       \big[    2d^{l_3}_{l_1l_2} G^{l_3m_3}_{l_1m_1l_2m_2}      -  (a_1 D^{l_3m_3}_{1~0~l_2m_2}  + a_2 D^{l_3m_3}_{1~0~l_1m_1})  \big]e_{l_3m_3}   \\
&&      ,      -im_1(-1)^{m_1}\delta^{l_1}_{ l_2} \delta^{m_1}_{-m_2}  \Big)\\
\end{eqnarray*}
which completes the proof.
\end{proof}
%
%
%

We use a well-known result to derive an alternative expression for the curvature of right-invariant metrics. This equivalent form will be applied in theorem \ref{thm curv e}.\\
The primary motivation for using this form is to ensure compatibility with the results presented in \cite{Arak-Sav} and \cite{Dowker-Zheng}. However, it's worth noting that one could also employ Lukatskii's method \cite{Luk}, which would yield the same result, as demonstrated in section \ref{section sect. comp}.
\begin{Remark}\label{simplified curvature}
For the right (left) invariant vector fields $X,Y,Z$ on any manifold and the Levi-Civita connection $\nabla$ we have
\begin{equation*}
2\ll \nabla_XY,Z\gg  =  \ll [X,Y]  ,  Z\gg    -   \ll [Y,Z]  ,  X\gg  +  \ll [Z,X]  ,  Y\gg.
\end{equation*}
Moreover we have
\begin{eqnarray*}
&&\ll  R(X,Y)Z  ,   W  \gg  =   \ll  \nabla_X\nabla_YZ  -  \nabla_Y\nabla_XZ  -  \nabla_{[X,Y]}Z   ,   W  \gg\\
&=&  - \ll  \nabla_YZ    ,   \nabla_XW  \gg    +   \ll\nabla_XZ    ,   \nabla_YW \gg        -    \ll  \nabla_{[X,Y]}Z  , W\gg  \\
&=&  - \ll  \nabla_YZ    ,   \nabla_XW  \gg    +   \ll\nabla_XZ    ,   \nabla_YW \gg      \\
&& - \frac{1}{2}\{   \ll   [[X,Y]  ,  Z] , W\gg    -   \ll [Z,W]  ,  [X,Y] \gg  +  \ll [W,[X,Y]]  ,  Z\gg   \}    \\
&=&     \ll\nabla_XZ    ,   \nabla_YW \gg   - \ll  \nabla_YZ    ,   \nabla_XW  \gg     \\
&& + \frac{1}{2}\ll [X,Y]   ,  [Z,W]   - ad^*_ZW   +  ad^*_WZ    \gg.
\end{eqnarray*}
\end{Remark}
%
%
%
%
The following theorem, which is one of the main objectives of this paper, allows us to express the curvature based on the structure constant. The proof of this theorem has been moved to the appendix.
\begin{thm}\label{thm curv e}
The curvature of operator $\hat{R}$ on $\hat\cg$ is given by
\begin{eqnarray}\label{eq Curvature formula e}
&& \ll \hat{R}  \big(  ((e_{l_1m_1},a_1)  ,  (e_{l_2m_2},a_2)  \big) (e_{l_3m_3},a_3)   ,   (e_{l_4m_4},a_4)  \gg_{\hat\cg}\\
\nonumber &=& \sum_{m,l} (-1)^m(\alpha^2 + l(l+1)) \Big[ \Gamma^{lm}_{l_1m_1a_1l_3m_3a_3} \Gamma^{l-m}_{l_2m_2a_2l_4m_4a_4}    -
\Gamma^{lm}_{l_2m_2a_2l_3m_3a_3} \Gamma^{l-m}_{l_1m_1a_1l_4m_4a_4}\\
\nonumber &&+ G^{lm}_{l_1m_1 l_2m_2}\Big( G^{l-m}_{l_3m_3l_4m_4}k^l_{l_3l_4}    +   \frac{1}{2} \big(  a_4D^{l-m}_{1~0~l_3m_3}  -a_3D^{l-m}_{1~0~l_4m_4} \big)             \Big)    \Big]\\
\nonumber && - \frac{m_1m_2}{4} (-1)^{m_1+m_2} \big[ \delta^{l_1}_{ l_3} \delta^{m_1}_{-m_3}    \delta^{l_2}_{l_4}\delta^{m_2}_{-m_4}
- \delta^{l_2}_{ l_3} \delta^{m_2}_{-m_3}    \delta^{l_1}_{ l_4} \delta^{m_1}_{-m_4}  \big]\\
\nonumber  &&-\frac{m_1m_3}{2} (-1)^{m_1+m_3}    \delta^{l_1}_{ l_2} \delta^{m_1}_{-m_2}    \delta^{l_3}_{ l_4} \delta^{m_3}_{-m_4}
\end{eqnarray}
where
\begin{equation}\label{k l}
k^l_{l_3l_4}=\frac{1}{2}\frac{    -\alpha^2  +  l(l+1)  -l_3(l_3+1)  - l_4(l_4+1)   }{   \alpha^2+l(l+1)   }.
\end{equation}
\end{thm}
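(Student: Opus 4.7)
The natural route is to apply the right-invariant form of the curvature tensor recorded in Remark \ref{simplified curvature},
\begin{equation*}
\ll\hat R(X,Y)Z,W\gg_{\hat\cg} = \ll\hat\nabla_XZ,\hat\nabla_YW\gg_{\hat\cg} - \ll\hat\nabla_YZ,\hat\nabla_XW\gg_{\hat\cg} + \tfrac{1}{2}\ll[X,Y],\,[Z,W]-\hat{ad}^*_ZW+\hat{ad}^*_WZ\gg_{\hat\cg},
\end{equation*}
specialize $(X,Y,Z,W)$ to the four basis pairs $(e_{l_im_i},a_i)$, and expand each factor using Proposition \ref{Prop hat nabla e} for $\hat\nabla$, Proposition \ref{Prop hat ad^*_e} for $\hat{ad}^*$, Lemma \ref{lemma  Te} for $T$, and Corollary \ref{Cor Omega(e,e)} for $\Omega$.

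\textbf{Treatment of the $\hat\nabla\hat\nabla$ contribution.} Each covariant derivative $\hat\nabla_XZ$ decomposes into a vector part $\Gamma^{lm}_{l_1m_1a_1l_3m_3a_3}e_{lm}$ and a scalar part $-\tfrac{im_1}{2}(-1)^{m_1}\delta^{l_1}_{l_3}\delta^{m_1}_{-m_3}$. Inside $\ll\hat\nabla_XZ,\hat\nabla_YW\gg_{\hat\cg}$ the vector parts contract through $\ll e_{lm},e_{l'm'}\gg_\cg=(-1)^m(\alpha^2+l(l+1))\delta^l_{l'}\delta^m_{-m'}$, which pins $l'=l$, $m'=-m$ and produces the weighted sum $(-1)^m(\alpha^2+l(l+1))\Gamma^{lm}\Gamma^{l,-m}$ of the first line of \eqref{eq Curvature formula e} once $(X,Y)$ is antisymmetrized. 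The two scalar parts multiply directly, yielding the contribution $-\tfrac{m_1m_2}{4}(-1)^{m_1+m_2}\delta^{l_1}_{l_3}\delta^{m_1}_{-m_3}\delta^{l_2}_{l_4}\delta^{m_2}_{-m_4}$, and the same antisymmetrization reproduces the bracketed $[\delta\delta\delta\delta-\delta\delta\delta\delta]$ on the third line of the target formula.

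\textbf{Treatment of the bracket--coadjoint contribution.} One has $[X,Y]=(G^{lm}_{l_1m_1l_2m_2}e_{lm},\,\Omega(e_{l_1m_1},e_{l_2m_2}))$, while the vector component of $[Z,W]-\hat{ad}^*_ZW+\hat{ad}^*_WZ$ is $\bigl(G^{lm}_{l_3m_3l_4m_4}-A^{lm}_{l_4m_4a_4l_3m_3a_3}+A^{lm}_{l_3m_3a_3l_4m_4a_4}\bigr)e_{lm}$ and its scalar component reduces simply to $\Omega(e_{l_3m_3},e_{l_4m_4})$ because $\hat{ad}^*$ has vanishing scalar slot by Proposition \ref{Prop hat ad^*_e}. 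Substituting \eqref{A eq} and exploiting the antisymmetry $G^{lm}_{l_4m_4l_3m_3}=-G^{lm}_{l_3m_3l_4m_4}$ collapses the three $G$-contributions into $2k^l_{l_3l_4}G^{lm}_{l_3m_3l_4m_4}$ with $k^l_{l_3l_4}$ as in \eqref{k l}, and merges the two $D$-contributions into $a_4D^{lm}_{1\,0\,l_3m_3}-a_3D^{lm}_{1\,0\,l_4m_4}$. Pairing with $[X,Y]$ under the $\hat\cg$-inner product, multiplying by the factor $\tfrac{1}{2}$ from Remark \ref{simplified curvature}, and substituting Corollary \ref{Cor Omega(e,e)} delivers the $k\cdot G$ and $G\cdot D$ pieces on the second line of \eqref{eq Curvature formula e} together with the final scalar term $-\tfrac{m_1m_3}{2}(-1)^{m_1+m_3}\delta^{l_1}_{l_2}\delta^{m_1}_{-m_2}\delta^{l_3}_{l_4}\delta^{m_3}_{-m_4}$.

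\textbf{Principal difficulty.} No individual step is conceptually deep, but the algebraic fusion that produces $k^l_{l_3l_4}$ is the one genuinely delicate moment: the unweighted coefficient $1$ from $[Z,W]$ must combine exactly with the two coefficients $\tfrac{\alpha^2+l_i(l_i+1)}{\alpha^2+l(l+1)}$ coming from $\hat{ad}^*$, with the signs dictated by the antisymmetry of $G$, in order to produce the asymmetric numerator $-\alpha^2+l(l+1)-l_3(l_3+1)-l_4(l_4+1)$ of \eqref{k l}. The remainder is meticulous bookkeeping of Kronecker deltas, $(-1)^m$ phases, and the translation between complex and real structure constants via \eqref{real structure constants}--\eqref{eq g 10}; since the manipulation is lengthy but entirely routine, it is appropriately relegated to the appendix.
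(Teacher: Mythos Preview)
Your proposal is correct and follows essentially the same route as the paper's own proof: both start from the right-invariant curvature identity of Remark~\ref{simplified curvature}, expand the two $\hat\nabla$-terms via Proposition~\ref{Prop hat nabla e} and the bracket--coadjoint term via Proposition~\ref{Prop hat ad^*_e} and Corollary~\ref{Cor Omega(e,e)}, and then perform exactly the algebraic collapse you describe to extract $2k^l_{l_3l_4}G^{l-m}_{l_3m_3l_4m_4}+(a_4D^{l-m}_{1\,0\,l_3m_3}-a_3D^{l-m}_{1\,0\,l_4m_4})$. Your identification of the $k^l_{l_3l_4}$ step as the one nontrivial algebraic point matches the paper's emphasis, and the remaining bookkeeping of $(-1)^m$ weights and Kronecker deltas is handled identically.
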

%
%
%
%
%
%
%
\begin{Remark}
In the case that $\alpha=a_1=a_2=0$ then
\[\hat{ad}^*_{(e_{l_2m_2},0)}(e_{l_1m_1},0)={ad}^*_{e_{l_2m_2}}e_{l_1m_1}=-B(e_{l_1m_1},e_{l_2m_2})=-\frac{ l_1(l_1+1) }{l_3(l_3+1)}G^{l_3m_3}_{l_1m_1l_2m_2}e_{l_3m_3}
\]
reduces to  equation (15) of \cite{Arak-Sav}. Note that the operator $B$ is \cite{Arak-Sav} is given by
\begin{equation*}
\ll B(X,Y),Z\gg_\cg=\ll X, [Y,Z] \gg_\cg
\end{equation*}
or equivalently
\begin{equation}\label{operator B}
ad^*_YX=-B(X,Y).
\end{equation}
More precisely we have
\begin{equation*}
\ll X, [Y,Z] \gg_\cg= -\ll X, ad_YZ \gg_\cg = -\ll ad^*_YX, Z \gg_\cg = \ll B(X,Y),Z\gg_\cg
\end{equation*}
for any $Z\in \cg$ which implies that $B(X,Y)=- ad^*_YX$.
Moreover the  expression \eqref{eqn hat Nabla e} for the covariant derivative coincides with equation (20) of \cite{Arak-Sav}.
Finally if  $a_i=0$ for $1\leq i\leq 4$ then, the curvature formula (\ref{eq Curvature formula e}) reduces to formula (23) of \cite{Arak-Sav}.
\end{Remark}
%
%
%
%
%
%
\begin{Remark}\label{Effect of the sign in Poisson bracket on curvature }
If the Poisson bracket is  defined  by
\begin{equation}\label{Poisson bracket skiba}
\{f,g\}_P:=  \frac{\partial f}{\partial \mu} \frac{\partial g}{\partial \lambda} -\frac{\partial f}{\partial \lambda}\frac{\partial g}{\partial \mu}
\end{equation}
(e.g. \cite{Lee-Pre} equation (8)) then $\{Y_{l_1m_1},Y_{l_2m_2}\}_P=-\{Y_{l_1m_1},Y_{l_2m_2}\}$. If we suppose that
\begin{eqnarray*}
\{Y_{l_1m_1},Y_{l_2m_2}\}_P:=\mathbb{G}^{l_3m_3}_{l_1m_1l_2m_2}Y_{l_3m_3}
\end{eqnarray*}
then 
\begin{equation*}
\mathbb{G}^{l_3m_3}_{l_1m_1l_2m_2} = -G^{l_3m_3}_{l_1m_1l_2m_2}\quad ,\quad \mathbb{D}^{l_3m_3}_{l_1m_1 l_2 m_2}=-D^{l_3m_3}_{l_1m_1l_2 m_2}
\end{equation*}
Since in the formula (\ref{eq Curvature formula e}) the terms include multiplication of two of the above terms, the sign convention for the Poisson bracket would not change the result of theorem \ref{thm curv e}.
\end{Remark}
%
%
%
%
%
%
%
%
\section{Sectional curvature computations}\label{section sect. comp}
\subsection{Curvature  of the zonal flow $Y_{10}$}\label{Cur of zonal flow Y10}
Equation (\ref{eq Curvature formula e})  enables us to compute the sectional curvature. In this part we consider the assumptions of Dowker and Mo-zheng \cite{Dowker-Zheng} and extend them to the case of rotating sphere. More precisely  suppose that $\hat\xi=(\nu\sqrt{\frac{4\pi}{3}}e_{10},a)$ be a zonal flow. Moreover, let $\hat{\eta}=(\eta,b)\in \hat\cg$ be a unit vector field such that it contains only one angular momentum $l=l_0$ in the form
$\hat{\eta}=(\eta_{l_0m_0}e_{l_0m_0} + \eta_{l_0-m_0}e_{l_0-m_0} ,0)$ with the assumption
\begin{eqnarray}\label{norm-eta=1}
\ll \hat{\eta},\hat{\eta}\gg_{\hat{\cg}}  &=& 2 \eta_{l_0m_0}\eta_{l_0-m_0}\ll e_{l_0m_0} , e_{l_0-m_0}\gg_\cg\\
&=& 2(-1)^{m_0}(\alpha^2+l_0(l_0+1))\eta_{l_0m_0}\eta_{l_0-m_0}=1 \nonumber
\end{eqnarray}
Then we have
\begin{eqnarray}\label{sect. cur Y 1 0}
\hat{\kappa}(\hat\xi,\hat\eta)  &=&   \frac{\ll  \hat{R}\big(\hat\xi,\hat\eta\big)\hat\eta~ ,~\hat\xi    \gg_{\hat\cg}}{\ll\hat\xi,\hat\xi\gg_{\hat\cg}}\\
\nonumber&=&   \frac{    m_0^2    }{    (\nu^2     \frac{4\pi  }{3}(\alpha^2+2)+  a^2)    (\alpha^2+l_0(l_0+1))^2      }             \Big[    \frac{  \nu^2 }{ 4}(\alpha^2+2)^2     +      \frac{\nu a}{2}   \left( \alpha^2+2 \right)   +     \frac{ a^2}{4}    \Big]
\end{eqnarray}
For  the details of computations see appendix. Note that for the case that $a=\alpha=0$ we have
\begin{eqnarray*}
\hat{\kappa}(\hat\xi,\hat\eta)  &=&
\frac{3}{8\pi}  \frac{    m_0^2    }{          l_0^2(l_0+1)^2      }
\end{eqnarray*}
which coincides with the equation (9) from \cite{Dowker-Zheng}.
%

%
%
%
%
%
%
\subsection{Sectional curvature and stability of the tradewind current}
Arnold in \cite{Arnold} assumed that the atmosphere is a two dimensional torus and the motion of the atmosphere is approximated by the tradewind $\xi=(\sin y,0)$ current and he computed the sectional curvature in the planes containing the tradewind current velocity field. He noticed that the sectional curvature in many directions is negative which makes the prediction of weather for long time  practically impossible.  Lukatskii in \cite{Luk} computed the sectional curvature of the tradewind flow (named by Lukatskii the "passat flow") and he extended the results of Arnold in  the more realistic case of two dimensional sphere. His results approved the estimates  of Arnold for the unreliability  of the weather forecast in longtime.\\
In this part  using theorem  \ref{thm curv e}, we compute the sectional curvature of the tradewind current at the presence of the Coriolis effect  which makes the situation more practical.

Consider the stream function $g(\mu)=\frac{1}{2}\sqrt{\frac{15}{8\pi}} \mu^2$ where the coefficient, as it is introduced in \cite{Luk}, is for normalization.

Since $\{ g,Y_{lm} \}=-\partial_\mu g\partial_\lambda Y_{lm}$, using the recursive relation (see e.g. \cite{Mess}, p. 493, eq. B.78)
\begin{equation}\label{rec eq for tradewind str cts}
\mu P^m_l(\mu) = \frac{l+m}{2l+1} P^m_{l-1}   +  \frac{l-m+1}{2l+1} P^m_{l+1}
\end{equation}
we get
\begin{eqnarray}\label{structure constants for tradewind}
\{ g,Y_{lm} \}=   -  im \sqrt{\frac{15}{8\pi}} \Big(  \sqrt{a^l_m} Y_{l-1 m}  +  \sqrt{a^{l+1}_m} Y_{l+1 m}  \Big)
\end{eqnarray}
where
$$a^l_m=\frac{l^2-m^2}{4l^2-1}.$$
We address the structure constants of this stream function (and consequently the corresponding vector field) by $G^{lm}_{\widehat{2~0} ~l_0m_0}$. More precisely we set
\begin{eqnarray*}
\{ g,Y_{l_0m_0} \} := G^{lm}_{\widehat{2~0} ~l_0m_0}e_{lm}.
\end{eqnarray*}
Equation (\ref{structure constants for tradewind}) implies that the only non-vanishing structure constants of the tradewind flow are
\begin{equation}\label{str cts for tradewind l-1}
G^{l_0-1 m_0}_{\widehat{2~0} ~l_0m_0} =  - G^{l_0-1 -m_0}_{\widehat{2~0} ~l_0-m_0}  = -im_0 \sqrt{\frac{15}{8\pi}}\sqrt{a^{l_0}_{m_0}}
\end{equation}
and
\begin{equation}\label{str cts for tradewind l+1}
G^{l_0+1 m_0}_{\widehat{2~0} ~l_0m_0} = -G^{l_0+1 -m_0}_{\widehat{2~0} ~l_0-m_0}  = -im_0 \sqrt{\frac{15}{8\pi}}\sqrt{a^{l_0+1}_{m_0}}.
\end{equation}
%
%
%
%
\begin{rem}
Note that $g(\mu) = \frac{1}{2} \sqrt{\frac{15}{8\pi}} \mu^2$ and $Y_{2~0} = \sqrt{\frac{5}{16\pi}} (3\mu^2 - 1)$ have distinct properties. For instance, they exhibit different structure constants, resulting in diverse sectional curvature (for the real structure constants of $Y_{2~0}$, refer to \cite{Arak-Sav}, equation (A20)). As shown in figure \ref{fig:boat1}, the sectional curvature of $g$ and the unit vectors $\eta = \eta_{l_0m_0}e_{l_0m_0} + \eta_{l_0-m_0}e_{l_0-m_0}$, where $|m_0| > 1$, is negative (see also \cite{Luk}, theorem 2, part). On the other hand, the sectional curvature $\kappa(\sgrad Y_{2~0}, \eta)$ is positive, as demonstrated in \cite{Dowker-Zheng}, page 2364.
\end{rem}
%
%
\begin{la}
For $e_{\widehat{20}}=\sgrad  g$ and $a_1,a_2\in\mathbb{R}$ and $m_0\in\mathbb{N}$ we have
\begin{eqnarray*}
&& \ll \hat{R}  \big(  ((\hat{e}_{20},a_1)  ,  (e_{l_0m_0},a_2)  \big) (\hat{e}_{20},a_1)   ,   (e_{l_0-m_0},a_2)  \gg_{\hat\cg}\\
&&=-\frac{15m_0^2}{32\pi}(-1)^{m_0}   \big(  \alpha^2+l_0(l_0+1)   \big)\Theta    -    \frac{(-1)^{m_0}}{4} \frac{a_1^2m_0^2}{  \alpha^2+l_0(l_0+1)   }
\end{eqnarray*}
where the term $\Theta$ is given by
\begin{eqnarray}\label{Theta}
\Theta  &=&  \Big[ (1-c_{l_0})^2  (  a_{m_0}^{l_0}b_{l_0} + \frac{a_{m_0}^{l_0+1}}{b_{l_0+1}}  )   +  2(1+c_{l_0}) (  a^{l_0}_{m_0} +  a^{l_0+1}_{m_0} )
-3 (\frac{a_{m_0}^{l_0}}{b_{l_0}}   +  a_{m_0}^{l_0+1} b_{l_0+1}  )\nonumber\\
&&  +  x_{l_0-1} \frac{a_{m_0}^{l_0}}{b_{l_0}}  \Big(  x_{l_0-1}  -2b_{l_0}(1-c_{l_0})  +2   \Big)  \nonumber\\
&&  +  x_{l_0+1}    a_{m_0}^{l_0+1} b_{l_0+1}  \Big(  x_{l_0+1}  -\frac{2}{b_{l_0+1}}(1-c_{l_0})  +2   \Big)  \Big]
\end{eqnarray}
and $a^l_m=\frac{l^2-m^2}{4l^2-1}$,  $b_l=\frac{\alpha^2+l(l+1)}{\alpha^2+l(l-1)}$, $c_l=\frac{6}{\alpha^2+l(l+1)}$ and $x_l=\frac{\alpha^2}{\alpha^2+l(l+1)}$.
\end{la}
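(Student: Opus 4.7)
The plan is to apply the curvature formula (\ref{eq Curvature formula e}) of Theorem \ref{thm curv e} directly, specialized to $(e_{l_1m_1},a_1) = (e_{l_3m_3},a_3) = (\hat{e}_{20},a_1)$, $(e_{l_2m_2},a_2) = (e_{l_0m_0},a_2)$ and $(e_{l_4m_4},a_4) = (e_{l_0-m_0},a_2)$. Two immediate simplifications drastically reduce the work: since $g(\mu) = \tfrac{1}{2}\sqrt{15/8\pi}\,\mu^2$ is axisymmetric one has $m_1 = m_3 = 0$, so the two central-cocycle delta lines at the bottom of (\ref{eq Curvature formula e})—which carry overall $m_1 m_2$ and $m_1 m_3$ factors—vanish outright; and $\{\phi,g\} = \{\mu, g\} = 0$, so $T\hat{e}_{20} = 0$, equivalently $D^{lm}_{1\,0\,\widehat{2~0}\,0} = 0$. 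The tradewind constants (\ref{str cts for tradewind l-1})--(\ref{str cts for tradewind l+1}) further restrict every $G^{lm}_{\widehat{2~0}\,l_0\pm m_0}$ to be nonzero only at $l \in \{l_0-1,l_0+1\}$ with $m = \pm m_0$, while the diagonal form $D^{lm}_{1\,0\,l_0 m_0} = \tfrac{-i m_0}{\alpha^2+l_0(l_0+1)}\,\delta^l_{l_0}\delta^m_{m_0}$ from the remark after Lemma \ref{lemma  Te} is supported at $(l_0,m_0)$. Together these collapse the infinite sum to finitely many surviving terms.

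To extract the $a_1^2$ contribution, I would expand both $\Gamma\cdot\Gamma$ products of (\ref{eq Curvature formula e}) via (\ref{Gamma eq }). The first product $\Gamma^{lm}_{\widehat{2~0}\,0\,a_1,\widehat{2~0}\,0\,a_1}\,\Gamma^{l-m}_{l_0m_0a_2,l_0-m_0a_2}$ vanishes because its leading factor uses only $G^{lm}_{\widehat{2~0}\widehat{2~0}} = 0$ and $D^{lm}_{1\,0\,\widehat{2~0}\,0} = 0$. In the second product $\Gamma^{lm}_{l_0m_0a_2,\widehat{2~0}0a_1}\,\Gamma^{l-m}_{\widehat{2~0}0a_1,l_0-m_0a_2}$, the mixed $a_1\cdot dG$ cross-terms also vanish, since $D^{\cdot}_{1\,0\,l_0\pm m_0}$ is supported at $l = l_0$ while the accompanying tradewind $G$ is supported at $l = l_0\pm 1$. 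Only the pure $D\cdot D$ piece survives, contributing $\tfrac{a_1^2}{4}D^{lm}_{1\,0\,l_0 m_0}D^{l-m}_{1\,0\,l_0 -m_0} = \tfrac{a_1^2 m_0^2}{4(\alpha^2+l_0(l_0+1))^2}\delta^l_{l_0}\delta^m_{m_0}$. Multiplying by the outer $(-1)^m(\alpha^2+l(l+1))$ at $(l_0,m_0)$ and inserting the overall minus sign carried by the second $\Gamma\Gamma$ product in (\ref{eq Curvature formula e}) yields exactly $-\tfrac{(-1)^{m_0}}{4}\tfrac{a_1^2 m_0^2}{\alpha^2+l_0(l_0+1)}$, which is the second summand of the claim.

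The remaining $a_1$-free contribution, which must assemble into $\Theta$, comes from the $dG \cdot dG$ block of the second $\Gamma\Gamma$ product together with the $G \cdot G \cdot k^l_{l_3 l_4}$ block in the third line of (\ref{eq Curvature formula e}); the $G \cdot D$ cross-terms on that line vanish by the same support-mismatch argument as above. Both surviving blocks are nonzero only at $l \in \{l_0-1, l_0+1\}$, $m = m_0$. I would split the sum into the $l = l_0-1$ slot (carrying $a^{l_0}_{m_0}$ from (\ref{str cts for tradewind l-1})) and the $l = l_0+1$ slot (carrying $a^{l_0+1}_{m_0}$ from (\ref{str cts for tradewind l+1})), rewrite the rational prefactors $d^l_{l_0,2}$, $d^l_{2,l_0}$ from (\ref{Gamma eq }) and $k^l_{2,l_0}$ from (\ref{k l}) in terms of the normalized quantities $b_l, c_l, x_l$ defined in the statement, and factor out the common prefactor $-\tfrac{15 m_0^2}{32\pi}(-1)^{m_0}(\alpha^2+l_0(l_0+1))$. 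The resulting polynomial in $a^{l_0}_{m_0}, a^{l_0+1}_{m_0}, b_{l_0}, b_{l_0+1}, c_{l_0}, x_{l_0\pm 1}$ should coincide with the displayed $\Theta$ of (\ref{Theta}).

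The main obstacle is the algebraic bookkeeping in this last step: verifying that the six distinct pieces of $\Theta$—the $(1-c_{l_0})^2$ term, the $2(1+c_{l_0})$ term, the $-3$ term, and the two $x_{l_0\pm 1}$ correction brackets—each emerge from the appropriate combinations of $d^{l_0\pm 1}_{l_0,2}\,d^{l_0\pm 1}_{2,l_0}$ and $G^2\cdot k^{l_0\pm 1}_{2,l_0}$ after clearing denominators. The principal algebraic identity needed is $\alpha^2 + l(l+1) = b_l(\alpha^2 + l(l-1))$, which lets one relate the denominator $\alpha^2 + l(l\pm 1)$ appearing in the prefactors to the reference $\alpha^2 + l_0(l_0+1)$; together with the definitions of $c_l$ and $x_l$, this allows the systematic rewriting of each surviving rational term into the compact $b, c, x$ form. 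Remark \ref{Effect of the sign in Poisson bracket on curvature } serves as a useful sign-convention consistency check, since all tradewind structure constants enter only in squared pairs.
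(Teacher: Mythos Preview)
Your proposal is correct and follows essentially the same route as the paper: both start from the curvature formula of Theorem \ref{thm curv e}, kill the first $\Gamma\Gamma$ product and all $D^{lm}_{1\,0\,\widehat{2~0}}$ terms via $\{ \mu,g\}=0$, use the support mismatch between $G^{lm}_{\widehat{2~0}\,l_0m_0}$ (at $l=l_0\pm1$) and $D^{lm}_{1\,0\,l_0m_0}$ (at $l=l_0$) to eliminate cross-terms, isolate the $a_1^2 D\!\cdot\! D$ piece, and reduce the remainder to the two slots $l=l_0\pm1$ before rewriting in the $b_l,c_l,x_l$ notation. The paper's version is slightly terser---it does not explicitly remark that the two $\delta$-lines of (\ref{eq Curvature formula e}) vanish because $m_1=m_3=0$, and it dispatches the final identification with $\Theta$ as ``a straightforward calculation''---but the logic and the order of reductions are identical.
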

%
%
\begin{proof}
Using the formula in theorem \ref{thm curv e} and the fact that $\Gamma^{lm}_{2~0~a_1~2~0~a_1}=D^{lm}_{1~0~2~0}=0$ for the term
\begin{equation*}
\mathcal{\hat{R}}:= \ll \hat{R}  \big(  (e_{\widehat{20}},a_1)  ,  (e_{l_0m_0},a_2)  \big) (e_{\widehat{20}},a_1)   ,   (e_{l_0-m_0},a_2)  \gg_{\hat\cg}
\end{equation*}
we have
\begin{eqnarray*}
\mathcal{\hat{R}} &=& \sum_{m,l} (-1)^m(\alpha^2 + l(l+1)) \Big[     - \Gamma^{lm}_{l_0m_0a_2\widehat{2~0}a_1} \Gamma^{l-m}_{\widehat{2~0}a_1l_0-m_0a_2}\\
&&+ G^{lm}_{\widehat{2~0} ~l_0m_0}\Big( G^{l-m}_{\widehat{2~0}~ l_0-m_0}k^l_{2l_0}    -  \frac{1}{2} a_1D^{l-m}_{1~0~l_0 -m_0}              \Big)    \Big]
\end{eqnarray*}
Due to the fact that in the expression of $G^{l\pm m}_{\widehat{2~0~} l_0\pm m_0}$  and $G^{l\pm m}_{1~0~l_0-m_0}$ contain the terms $\delta_{l_0\pm 1}^l$ and $\delta_{l_0}^l$ respectively, then their multiplication is always zero.
As a consequence we have
\begin{eqnarray*}
&&  \sum_{m,l} \Gamma^{lm}_{l_0m_0a_2\widehat{2~0}a_1} \Gamma^{l-m}_{\widehat{2~0}a_1l_0-m_0a_2}\\
&=& \sum_{m,l}    \Big(     d^l_{l_02}G^{lm}_{l_0m_0\widehat{2~0}}    -   \frac{1}{2} (a_2D^{lm}_{1~0~\widehat{2~0}}  +  a_1D^{lm}_{1~0~l_0 m_0}    \Big)    \\
&&\times \Big(     d^l_{2l_0}G^{l-m}_{\widehat{2~0}~l_0-m_0}    -    \frac{1}{2} (  a_1D^{l-m}_{1~0~ l_0 -m_0}  +  a_2D^{lm}_{~1~0~\widehat{2~0}}    \Big)\\
&=&   \sum_{m,l} d^l_{l_02}G^{lm}_{l_0m_0\widehat{2~0}}  d^l_{2l_0}G^{l-m}_{\widehat{2~0}~l_0-m_0}   +  \frac{1}{4}    a_1^2   D^{lm}_{1~0~l_0 m_0}  D^{l-m}_{1~0~ l_0 -m_0}\\
&=&   \sum_{m,l} d^l_{l_02}G^{lm}_{l_0m_0\widehat{2~0}}  d^l_{2l_0}G^{l-m}_{\widehat{2~0}~l_0-m_0}   +  \frac{1}{4}      \frac{  a_1^2 m_0^2  }{ (\alpha^2 +l_0(l_0+1))^2}  \delta_{m_0}^m\delta_{l_0}^l.
\end{eqnarray*}
Using the last equality  we get
\begin{eqnarray*}
\mathcal{\hat{R}} &=& \sum_{m,l} (-1)^m(\alpha^2 + l(l+1)) \Big[       -d^l_{l_02}d^l_{2l_0}   G^{lm}_{l_0m_0\widehat{2~0}}  G^{l-m}_{\widehat{2~0}~l_0-m_0}   \\
&&+ k^l_{2l_0}G^{lm}_{\widehat{2~0}~ l_0m_0} G^{l-m}_{\widehat{2~0}~ l_0-m_0}       -  \frac{1}{4}    \frac{  a_1^2 m_0^2  }{ (\alpha^2 +l_0(l_0+1))^2} \delta_{m_0}^m\delta_{l_0}^l \Big]\\
%
\end{eqnarray*}
On the other hand using the structure constants (\ref{str cts for tradewind l-1}), (\ref{str cts for tradewind l+1}) and an straightforward calculation  we observe that
\begin{eqnarray*}
&&\sum_{m,l} (-1)^m(\alpha^2 + l(l+1)) \Big[       d^l_{l_02}d^l_{2l_0}   G^{lm}_{\widehat{2~0}~l_0m_0}  G^{l-m}_{\widehat{2~0}~l_0-m_0}
+ k^l_{2l_0}G^{lm}_{\widehat{2~0} l_0m_0} G^{l-m}_{\widehat{2~0}~ l_0-m_0}       \Big]\\
&&\sum_{m,l} (-1)^m(\alpha^2 + l(l+1))    G^{lm}_{\widehat{2~0}~l_0m_0}  G^{l-m}_{\widehat{2~0}~l_0-m_0}   \Big[       d^l_{l_02}d^l_{2l_0}
+ k^l_{2l_0}          \Big]\\
&=& -\frac{15m_0^2}{8\pi}(-1)^{m_0}(\alpha^2 + (l_0-1)l_0)  a_{m_0}^{l_0}    \Big(  -d^{l_0-1}_{l_02} d^{l_0-1}_{2l_0}  - k^{l_0-1}_{2l_0}   \Big)\\
&&  -  \frac{15m_0^2}{8\pi}(-1)^{m_0}    \big(    \alpha^2 + (l_0+1)(l_0+2)    \big)     a_{m_0}^{l_0+1}    \Big(  -d^{l_0+1}_{l_02} d^{l_0+1}_{2l_0}  - k^{l_0+1}_{2l_0}   \Big)\\
&=&  - \frac{15m_0^2}{32\pi^2}(-1)^{m_0}   \big(  \alpha^2+l_0(l_0+1)   \big)\Theta.
\end{eqnarray*}
which completes the proof.
\end{proof}
%
%
%
%
Now, suppose that  $e_{\widehat{2~0}}=\sgrad \frac{1}{2}\sqrt{\frac{15}{8\pi}} \mu^2$. Moreover suppose that $\eta=\eta_{l_0m_0}e_{l_0m_0} + \eta_{l_0-m_0}e_{l_0-m_0}$ with $m_0\in\mathbb{N}$ be a vector with norm one. Consider the vectors $\hat{\xi}=(e_{\widehat{2~0}},a)$ and  $\hat{\eta}=(\eta,0)$ in $\hat{\cg}$. In the next step we calculate the sectional curvature of the plane which contains the tradewind current vector field $\hat\xi$ and $\hat\eta$.
%
%
%
\begin{thm}\label{theorem sectional curv of tw}
\textbf{i}. For the two dimensional planes containing the tradewind current and $\hat{\eta}$ we have
\begin{eqnarray*}
\hat{\kappa}(\hat{\xi},\hat\eta)= \frac{1}{\frac{3}{8}\alpha^2 + 1  + a^2}      \Big(    \frac{15m_0^2}{32\pi}\Theta  +  \frac{1}{4}\frac{a^2m_0^2}{(\alpha^2 +l_0(l_0+1))^2}   \Big).
\end{eqnarray*}
where $\Theta$ is given by (\ref{Theta}).\\
\textbf{ii}. For $\hat{\eta}=\left( \eta_{l_0l_0}e_{l_0l_0} + \eta_{l_0-l_0}e_{l_0-l_0}, 0 \right)$ we have
\begin{eqnarray*}
\lim_{l_0\rightarrow \infty}\hat{\kappa}(\hat{\xi},\hat\eta)= \frac{-1}{\frac{3}{8}\alpha^2 + 1  + a^2}         \frac{15}{8\pi}
\end{eqnarray*}
\end{thm}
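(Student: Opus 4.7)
The plan is to write the sectional curvature in the standard form
\[
\hat\kappa(\hat\xi,\hat\eta)=\frac{\ll \hat R(\hat\xi,\hat\eta)\hat\eta,\hat\xi\gg_{\hat\cg}}{\|\hat\xi\|^2\,\|\hat\eta\|^2-\ll\hat\xi,\hat\eta\gg_{\hat\cg}^2}
\]
and reduce every ingredient to data already supplied by the preceding lemma. For part (i) I would first dispatch the denominator. Since $\hat\xi=(\sgrad g,a)$,
\[
\|\hat\xi\|_{\hat\cg}^2=\langle(\alpha^2-\Delta)g,g\rangle+a^2=\alpha^2\|g\|_{L^2}^2+\|\nabla g\|_{L^2}^2+a^2,
\]
and a one-line integration using $g=\tfrac12\sqrt{15/(8\pi)}\,\mu^2$ on $\Sph^2$ yields $\|g\|_{L^2}^2=3/8$ and $\|\nabla g\|_{L^2}^2=1$, giving $\|\hat\xi\|^2=\tfrac{3}{8}\alpha^2+1+a^2$. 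Because $g\in\Spann\{Y_{00},Y_{20}\}$ while $\hat\eta$ involves only $Y_{l_0\pm m_0}$ with $m_0\ge 1$, the pairing formula (\ref{inner pproduct}) forces $\ll\hat\xi,\hat\eta\gg_{\hat\cg}=0$; the unit-length assumption and (\ref{norm-eta=1}) fix $2\eta_{l_0m_0}\eta_{l_0-m_0}=(-1)^{m_0}/(\alpha^2+l_0(l_0+1))$.

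Next I would expand the numerator by bilinearity with $E_\pm=(e_{l_0\pm m_0},0)$,
\[
\ll \hat R(\hat\xi,\hat\eta)\hat\eta,\hat\xi\gg=\sum_{i,j\in\{+,-\}}\eta_i\eta_j\ll \hat R(\hat\xi,E_i)E_j,\hat\xi\gg.
\]
The diagonal pieces $(i=j)$ vanish for $m_0\ne 0$: inspecting Theorem \ref{thm curv e} with the indices of $E_+$ in positions $2$ and $4$, one checks that $\Gamma^{lm}_{2\,0\,a\,2\,0\,a}\equiv 0$ (because $\{Y_{20},Y_{20}\}=0$ and $Te_{20}=0$), that the surviving $G^{lm}_{2\,0\,l_0m_0}G^{l-m}_{2\,0\,l_0m_0}$ factor demands simultaneously $m=m_0$ and $m=-m_0$, that the mixed $G\cdot D$ terms pair an $l=l_0\pm 1$ selection rule with an $l=l_0$ one, and that the $\delta$-tail is killed by $m_1=m_3=0$. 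The two off-diagonal pieces are equal by the Riemann symmetries $\ll R(X,Y)Z,W\gg=-\ll R(X,Y)W,Z\gg$ and $R(X,Y,Z,W)=R(Z,W,X,Y)$, and each equals the negative of the quantity evaluated in the preceding lemma. Multiplication by $2\eta_+\eta_-$ cancels the $(-1)^{m_0}(\alpha^2+l_0(l_0+1))$ factor coming out of the lemma, and dividing by $\|\hat\xi\|^2$ produces the formula in (i).

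For part (ii) I set $m_0=l_0$. The identity $a^{l_0}_{l_0}=0$ kills every summand of $\Theta$ carrying the prefactor $a^{l_0}_{m_0}$ (or $a^{l_0}_{m_0}/b_{l_0}$), reducing $\Theta$ to
\[
\Theta=a^{l_0+1}_{l_0}\Big[\tfrac{(1-c_{l_0})^2}{b_{l_0+1}}+2(1+c_{l_0})-3b_{l_0+1}+x_{l_0+1}b_{l_0+1}\bigl(x_{l_0+1}-\tfrac{2(1-c_{l_0})}{b_{l_0+1}}+2\bigr)\Big].
\]
As $l_0\to\infty$ we have $c_{l_0},x_{l_0+1}=O(l_0^{-2})$ and $b_{l_0+1}=1+2l_0^{-1}+O(l_0^{-2})$. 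The leading constants inside the bracket are $1+2-3+0=0$, so I must carry the $O(l_0^{-1})$ corrections, which combine to $-8/l_0$. Together with $a^{l_0+1}_{l_0}\sim 1/(2l_0)$ this yields $m_0^2\Theta=l_0^2\Theta\to -4$, hence $\tfrac{15m_0^2}{32\pi}\Theta\to -\tfrac{15}{8\pi}$, while the $a^2$-piece vanishes since $l_0^2/(\alpha^2+l_0(l_0+1))^2\to 0$. The denominator being $l_0$-independent, the claimed limit follows.

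The main obstacle is the asymptotic step in (ii): because the $O(1)$ part of the bracket cancels exactly, I must track the first subleading corrections of $c_{l_0}$, $b_{l_0+1}$ and $x_{l_0+1}$ coherently before multiplying by the $O(l_0^{-1})$ prefactor $a^{l_0+1}_{l_0}$; a single sign error or a missed $1/l_0$ correction produces a finite but wrong-signed limit.
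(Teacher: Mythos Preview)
Your argument is correct and follows essentially the same route as the paper's own proof: compute $\|\hat\xi\|^2=\tfrac{3}{8}\alpha^2+1+a^2$, note orthogonality of $\hat\xi$ and $\hat\eta$, reduce the numerator via the Riemann symmetries to twice the quantity supplied by the preceding lemma, and use the normalization (\ref{norm-eta=1}) to clear the $(-1)^{m_0}(\alpha^2+l_0(l_0+1))$ factor. Your treatment of part (ii) is in fact more complete than the paper's, which simply asserts $\lim_{l_0\to\infty}l_0^2\Theta=-4$ without carrying out the subleading expansion you (correctly) perform.
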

\begin{proof}
\textbf{i}. The proof is a direct consequence of the previous lemma. In fact we have $\ll\hat{\xi},\hat{\xi}\gg_{\hat\cg}  =  \frac{3}{8}\alpha^2 + 1  + a^2$ and
\begin{eqnarray*}
\hat{\kappa}(\hat{\xi},\hat\eta)  &=&   \frac{  \ll  \hat{R}(\hat\xi,\hat\eta)\hat\eta ~,~\hat\xi  \gg_{\hat\cg}    }{
\ll  \hat\xi,\hat\xi\gg_{\hat\cg}    \ll  \hat\eta,\hat\eta\gg_{\hat\cg} - \ll  \hat\xi,\hat\eta\gg_{\hat\cg}^2  }=  - \frac{  \ll  \hat{R}(\hat\xi,\hat\eta)\hat\xi ~,~\hat\eta  \gg_{\hat\cg}    }{
\ll  \hat\xi,\hat\xi\gg_{\hat\cg}      }\\
&=& -\frac{ 2\ll \hat{R}  \big(  (\hat{e}_{20},a_1)  ,  (e_{l_0m_0},0)  \big) (\hat{e}_{20},a_1)   ,   (e_{l_0-m_0},0)  \gg_{\hat\cg}  \eta_{l_0m_0}\eta_{l_0-m_0}}{
\ll  \hat\xi,\hat\xi\gg_{\hat\cg}      }\\
&=& \frac{-2(-1)^{m_0}\eta_{l_0m_0}\eta_{l_0-m_0}}{  \frac{3}{8}\alpha^2 + 1  + a^2   }     \Big(    - \frac{15m_0^2}{32\pi}(\alpha^2+l_0(l_0+1))\Theta  \\
&&-\frac{1}{4}\frac{a^2m_0^2}{(\alpha^2 +l_0(l_0+1))}   \Big)  \\
&=& \frac{1}{  \frac{3}{8}\alpha^2 + 1  + a^2   }     \Big(     \frac{15m_0^2}{32\pi}\Theta  +  \frac{1}{4}\frac{a^2m_0^2}{(\alpha^2 +l_0(l_0+1))^2}   \Big)  \\
\end{eqnarray*}
where in the last line we used the equation (\ref{norm-eta=1}) from appendix.\\[2mm]
\textbf{ii}. To prove the second part we note that $\lim_{l_0\rightarrow\infty}\frac{l_0^2}{(\alpha^2 +l_0(l_0+1))^2}=0$ and $\lim_{l_0\rightarrow\infty}l_0^2\Theta=-4$.
\end{proof}
%
%
\begin{Remark}
Figure \ref{fig:1}, represents the sectional curvature $\hat{\kappa}(\hat{\xi},\hat\eta)$ with $\hat{\xi}=(e_{\widehat{2~0}},a)$ and $\hat{\eta}=\left( \eta_{l_0l_0}e_{l_0l_0} + \eta_{l_0-l_0}e_{l_0-l_0}, 0 \right)$ as we discussed in theorem \ref{theorem sectional curv of tw}, part ii.
As we can see from the diagram \ref{fig:1},  bigger  parameters of $"a"$  makes the curvature of tradewind closer to zero and even for some $l_0$ the curvature will be positive. For example for  $a=12$ the wave numbers  $l_0=2,3$  result positive sectional curvature.
\end{Remark}
\begin{figure}[!h]
\centering
  \includegraphics[width=7cm]{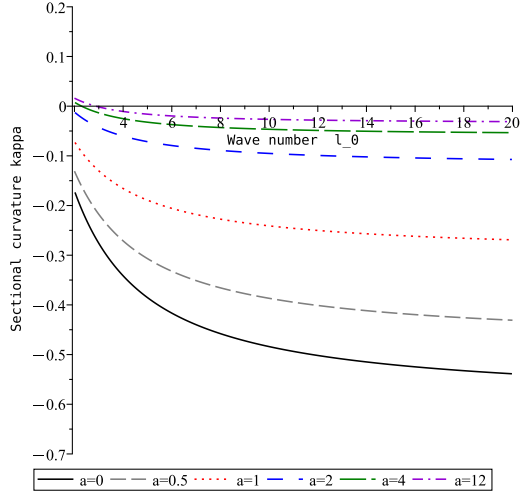}
  \caption{Sectional curvature of the tradewind current}\label{fig:1}
  \label{fig:boat1}
\end{figure}
%
%
%
%
%
%
\begin{Remark}
If $a=\alpha=0$ then $\hat{\kappa}(\hat{\xi},\hat\eta)=  \frac{15m_0^2}{32\pi}\Theta$ is the result in part 1, theorem 2 of \cite{Luk}.
However, note that Arnold in \cite{Arnold} and Lokatskii in \cite{Luk} used the inner products of the form
\begin{eqnarray*}
<<  e_{l_1m_1}  ,   e_{l_2m_2}    >>    :=  \langle \Delta_\theta(  Y_{l_1m_1}  ) ,  \overline{Y}_{l_2m_2}    \rangle
=\int_{S^2} (\alpha^2-\Delta)Y_{l_1m_1} \overline{Y}_{l_2m_2} d\mu
\end{eqnarray*}
which is different by a complex conjugate from our consideration (\ref{inner pproduct}).
\end{Remark}
%
%
%
%
%
%
\subsection{Unreliability of long-term weather forecast}
Following the discussions of \cite{Arnold, Luk}, we assume that the atmosphere is modeled as a homogeneous incompressible fluid on $\mathbb{S}^2$.
Taking into account the notations presented in  \cite{Luk}, we consider the
coordinate system
\begin{eqnarray*}
f:(-1,1)\times (0,2\pi)  &\longrightarrow & \Sph^2\subseteq\mathbb{R}^3\\
(\mu,\lambda)   &\longmapsto & (\sqrt{1-\mu^2}\sin\lambda  ,  \sqrt{1-\mu^2}\cos\lambda  ,  \mu)
\end{eqnarray*}
for $\Sph^2$.
Then the associated induced metric on $\Sph^2$ has the coefficients $g_{11}=\frac{1}{1-\mu^2}$, $g_{22}=1-\mu^2$ and $g_{12}=g_{21}=0$. The skew-gradient of the stream function $g(\mu)=\frac{1}{2}\sqrt{\frac{15}{8\pi}}\mu^2$ is
\begin{equation*}
e_{\widehat{2~0}}(\mu,\lambda)=-\sqrt{  \frac{15}{8\pi}   }   \mu\partial_\lambda
\end{equation*}
Using chain rule and the above mentioned coordinate system one can write $e_{\widehat{2~0}}(\mu,\lambda) =  \sqrt{  \frac{15}{8\pi}  } \mu(-y\partial_x+x\partial_y)$ which is the alternative form used in \cite{Luk}.
The maximus velocity of particles in this vector field is obtained by finding the critical points of $\|e_{\widehat{2~0}} \|^2={\frac{15}{8\pi}} \mu^2(1-\mu^2)$. Zero  is a minimum and $\mu=\pm\frac{1}{\sqrt{2}}$ are the maximums with the value $\|e_{\widehat{2~0}}(\pm\frac{1}{\sqrt{2}} , \lambda) \|=\sqrt{\frac{15}{8\pi}}\frac{1}{2}$. In the other words, the fastest particles with the  maximus velocity $\sqrt{\frac{15}{8\pi}}\frac{1}{2}$ are located on the latitudes $\pm\frac{\pi}{4}$.

Now, we suppose that the motion of the atmosphere is approximated by the tradewind current $\hat{{\xi}}=(e_{\widehat{2~0}} , a)  \in  \hat\cg$. In fact, the parameter $"a"$ from central extension, represents the rotation effect in the solution.
Since in the large scale $\alpha\ll 1$ we consider the case that $\alpha=0$. Of course for the case that $\alpha\neq0$ the same argument works with considering the Froude number in the sectional curvature.

As a result of theorem \ref{theorem sectional curv of tw}, we define the \emph{mean sectional curvature} to be
$$\hat{\kappa}_{av}=\frac{-15}{8\pi( 1+ a^2)}\beta_{Luk}$$
where $\beta_{Luk}\in(0,1)$ is a constant suggested by Lukatskii  in \cite{Luk}.  He applied the parameter $\beta_{Luk}$ and   measured the  error of prediction for $\beta_{Luk}=\frac{1}{4}$ and $\beta_{Luk}=\frac{1}{16}$. However, he used the symbol $\alpha$ for this constant which du to its similarity with the Froude number we replaced it with $\beta_{Luk}$.\\

As it is stated in \cite{Arnold-Khesin}, chapter 4 part 1C, if the sectional curvature in (almost) all directions in negative, then the distance of initially (infinitely) close geodesics increases exponentially by the equation  $y(s)=e^{\sqrt{-\kappa_{av}}s}$.
Because of that we discuss the exponential stability/instability of quasi-geostrophic motions  which is different from  the stability of the corresponding vector fields (see also \cite{Arnold-M}, Appendix 2).\\
Recall that the characteristic path length is the average path length  which an small error in the initial condition is increased by a factor of $e$ (\cite{Arnold-Khesin}, Remark 1.16).

Following \cite{Arnold-Khesin, Luk} we set the average path length to be
\begin{equation*}
S:=\sqrt{-\frac{1}{\hat{\kappa}_{av}}} =   \sqrt{\frac   {8\pi( 1+ a^2)}  {15 \beta_{Luk}}  }
\end{equation*}
Note that the parameter $a$ appears in $\kappa_{av}$ and the maximum velocity of particles of the tradewind  on $\Sph^2$ remains unchanged as it is computed above. More precisely, the fastest particle at time $T:= \sqrt{\frac   {8\pi( 1+ a^2)}  {15 \beta_{Luk}}  }$ travels the distance
$\frac{1}{2}\sqrt{\frac   {   1+ a^2 }  { \beta_{Luk}}  }$. Since on $\pm45^\circ$ the orbit length is $\sqrt{2}\pi$, the necessary time for this particle to travel one orbit is
\begin{equation*}
s: = 2\sqrt{2}\pi    \sqrt{\frac   { \beta_{Luk}}  {   1+ a^2 }    }  \sqrt{\frac   {8\pi( 1+ a^2)}  {15 \beta_{Luk}}  }
=2\sqrt{2}\pi    \sqrt{\frac   { \beta_{Luk}}  {   1+ a^2 }    }  \sqrt{\frac{-1}{\hat{\kappa}_{av}  }}.
\end{equation*}
The error $\epsilon$ after the necessary time for one orbit (one day) increases with the factor
\begin{eqnarray*}
\epsilon e^{\sqrt{-\hat{\kappa}_{av}} s}  =  \epsilon e^{  \sqrt{ -\hat{\kappa}_{av}  } 2\sqrt{2}\pi    \sqrt{\frac     { \beta_{Luk}}{   1+ a^2 }  } \sqrt{\frac{-1}{\hat{\kappa}_{av}  }} }
=  \epsilon e^{   2\sqrt{2}\pi    \sqrt{\frac{ \beta_{Luk} }{   1+ a^2 }}    }
\end{eqnarray*}
The diameter of earth is approximately $12750 ~\textrm{km}$ and its equatorial circumference is approximately $40,000 ~\textrm{km}$. If we consider the average speed of the trade wind $100 ~\textrm{km/h}$ then the necessary time to pass the orbit at latitude $\pm 45^\circ$ is approximately  $ \frac{400}{\sqrt{2}}\simeq 282$ hours (less than 12 days).
Then after n months the initial error $\epsilon$ increases by the factor
\begin{eqnarray*}
\epsilon e^{  n \frac{30\times 24}{400} 4\pi    \sqrt{\frac{ \beta_{Luk} }{   1+ a^2 }}    }  =  \epsilon 10^{  n \frac{30\times 24}{400} 4\pi    \sqrt{\frac{ \beta_{Luk} }{   1+ a^2 }} \log^e_{10}   }   \simeq  \epsilon 10^{  10 n     \sqrt{\frac{ \beta_{Luk} }{   1+ a^2 }}     }
\end{eqnarray*}
If $a=0$ then, the results coincide with the estimates of \cite{Luk}. As we can see, the Coriolis effect decreases the exponent and we can say that rotation has stability effect. The same holds true for the Froude number $\alpha$. As an special case, inspired by \cite{Skiba}, page 46, equation 3.1.1, in the absence of friction and turbulent  we fix $a=2$. If we assume $\beta_{Luk}=\frac{1}{16}$, then the period which the error appears with the magnitude $10^5$ is $4.5$ months more than two times of the estimated time in \cite{Luk}. For $\beta_{Luk}=\frac{1}{4}$ after almost two months the initial error $\epsilon$ increases by the factor $\epsilon10^{4.5}$ as Arnold in \cite{Arnold} estimated. Clearly, the choice of average sectional curvature (and consequently $\beta_{Luk}$) is essential for the estimate. Yoshida in \cite{Yoshida} suggested another approach to determine the average sectional curvature.  \\

%
%
%
%
%
%
%
%
\section{Appendix}
%
%
Proof of theorem \ref{thm curv e}.\\
\begin{proof}
Using remark (\ref{simplified curvature}) we get
\begin{eqnarray*}
&& \ll \hat{R}  \big(  (e_{l_1m_1},a_1)  ,  (e_{l_2m_2},a_2)  \big) (e_{l_3m_3},a_3)   ,   (e_{l_4m_4},a_4)  \gg_{\hat\cg}\\
&=&\ll    \hat\nabla_{(e_{l_1m_1},a_1)}   (e_{l_3m_3},a_3)   ,  \hat\nabla_{(e_{l_2m_2},a_2)}(e_{l_4m_4},a_4)  \gg_{\hat\cg}\\
&&-  \ll    \hat\nabla_{(e_{l_2m_2},a_2)}   (e_{l_3m_3},a_3)   ,  \hat\nabla_{(e_{l_1m_1},a_1)}(e_{l_4m_4},a_4)  \gg_{\hat\cg}\\
&&+ \frac{1}{2}  \ll    [(e_{l_1m_1},a_1)  ,  (e_{l_2m_2},a_2)]     ~~,~~  [(e_{l_3m_3},a_3)   ,   (e_{l_4m_4},a_4)]  \\
&&-\hat{ad}^*_{(e_{l_3m_3},a_3)}    ,   (e_{l_4m_4},a_4)    +\hat{ad}^*_{(e_{l_4m_4},a_4)}(e_{l_3m_3},a_3)      \gg_{\hat\cg}.
\end{eqnarray*}
Using proposition (\ref{Prop hat nabla e}) the first term of the above expression becomes
\begin{eqnarray*}
&&   \ll    \hat\nabla_{(e_{l_1m_1},a_1)}   (e_{l_3m_3},a_3)   ,  \hat\nabla_{(e_{l_2m_2},a_2)}(e_{l_4m_4},a_4)  \gg_{\hat\cg}\\
&=&  \ll  \Big(   \Gamma^{l m }_{l_1m_1a_1l_3m_3a_3}e_{l m }    ~ ,~     -i\frac{m_1}{2}(-1)^{m_1}\delta^{l_1}_{ l_3} \delta^{m_1}_{-m_3}   \Big)   \\
&&~~,~~ \Big(   \Gamma^{l m }_{l_2m_2a_2l_4m_4a_4}e_{l m }    ~ ,~     -i\frac{m_2}{2}(-1)^{m_2}\delta^{l_2}_ {l_4} \delta^{m_2}_{-m_4}   \Big)   \gg_{\hat\cg}\\
&=& \sum_{m,l} (-1)^{m}(\alpha^2+l(l+1))   \Gamma^{l m }_{l_1m_1a_1l_3m_3a_3} \Gamma^{l -m }_{l_2m_2a_2l_4m_4a_4} \\
&&- \frac{m_1m_2}{4} (-1)^{m_1+m_2}  \delta^{l_1}_{ l_3} \delta^{m_1}_{-m_3}    \delta^{l_2}_{l_4} \delta^{m_2}_{-m_4}
\end{eqnarray*}
The second term is calculated similarly and it  is
\begin{eqnarray*}
&&\ll    \hat\nabla_{(e_{l_2m_2},a_2)}   (e_{l_3m_3},a_3)   ,  \hat\nabla_{(e_{l_1m_1},a_1)}(e_{l_4m_4},a_4)  \gg_{\hat\cg}\\
&=& \sum_{m,l} (-1)^m(\alpha^2 + l(l+1)) \Gamma^{lm}_{l_2m_2a_2l_3m_3a_3} \Gamma^{l-m}_{l_1m_1a_1l_4m_4a_4}\\
&& - \frac{m_1m_2}{4} (-1)^{m_1+m_2}   \delta^{l_2}_{ l_3} \delta^{m_2}_{-m_3}    \delta^{l_1}_{ l_4} \delta^{m_1}_{-m_4}  \big].
\end{eqnarray*}
Finally we have
\begin{eqnarray*}
&&  \frac{1}{2} \ll    [(e_{l_1m_1},a_1)  ,  (e_{l_2m_2},a_2)]     ~~,~~  [(e_{l_3m_3},a_3)   ,   (e_{l_4m_4},a_4)]  \\
&&-\hat{ad}^*_{(e_{l_3m_3},a_3)}    ,   (e_{l_4m_4},a_4)    +\hat{ad}^*_{(e_{l_4m_4},a_4)}(e_{l_3m_3},a_3)      \gg_{\hat\cg}\\
&=& \frac{1}{2}\ll    \Big( G^{lm}_{l_1m_1l_2m_2}e_{lm}   , \Omega(  e_{l_1m_1},e_{l_2m_2})  \Big)     ~~,~~  \Big( G^{lm}_{l_3m_3l_4m_4}e_{lm}   , \Omega(  e_{l_3m_3},e_{l_4m_4})  \Big)  \\
&&-  \Big( A^{lm}_{l_4m_4a_4l_3m_3a_3}e_{lm}   ,  0   \Big)  +  \Big( A^{lm}_{l_3m_3a_3l_4m_4a_4}e_{lm}   , 0  \Big)    \gg_{\hat\cg}\\
\end{eqnarray*}
\begin{eqnarray*}
&=&  \frac{1}{2}  \ll     G^{lm}_{l_1m_1l_2m_2}e_{lm}         ~~,~~    G^{lm}_{l_3m_3l_4m_4}e_{lm}     -   A^{lm}_{l_4m_4a_4l_3m_3a_3}e_{lm}  + A^{lm}_{l_3m_3a_3l_4m_4a_4}e_{lm}       \gg_{\cg}\\
&&    +  \frac{1}{2}\Omega(  e_{l_1m_1},e_{l_2m_2})\Omega(  e_{l_3m_3},e_{l_4m_4})  \\
&=& \frac{1}{2} \sum_{m,l} (-1)^m(\alpha^2 + l(l+1))   G^{lm}_{l_1m_1l_2m_2} \Big(G^{l-m}_{l_3m_3l_4m_4}     -   A^{l-m}_{l_4m_4a_4l_3m_3a_3} + A^{l-m}_{l_3m_3a_3l_4m_4a_4}\Big)\\
&&-\frac{m_1m_3}{2} (-1)^{m_1+m_3}    \delta^{l_1}_{ l_2} \delta^{m_1}_{-m_2}    \delta^{l_3}_{ l_4} \delta^{m_3}_{-m_4}.
\end{eqnarray*}
Using proposition  (\ref{Prop hat ad^*_e}) we obtain
\begin{eqnarray*}
&&  G^{l-m}_{l_3m_3l_4m_4}     -   A^{l-m}_{l_4m_4a_4l_3m_3a_3} + A^{l-m}_{l_3m_3a_3l_4m_4a_4}  \\
&=&   G^{l-m}_{l_3m_3l_4m_4}  - \Big( -\frac{\alpha^2+l_4(l_4+1)}{\alpha^2+l(l+1)} G^{l-m}_{l_4m_4 l_3m_3}   -    a_4D^{l-m}_{1~0~
l_3m_3}   \Big)\\
&& ~~~~~~~~~~~~~+  \Big( -\frac{\alpha^2+l_3(l_3+1)}{\alpha^2+l(l+1)} G^{l-m}_{l_3m_3 l_4m_4 }   -    a_3D^{l-m}_{1~0~
l_4m_4}   \Big)
\end{eqnarray*}
\begin{eqnarray*}
&=&   G^{l-m}_{l_3m_3l_4m_4} \Big(   1   -\frac{\alpha^2+l_4(l_4+1)}{\alpha^2+l(l+1)}  -\frac{\alpha^2+l_3(l_3+1)}{\alpha^2+l(l+1)}  \Big)   \\
&&+  \big( a_4D^{l-m}_{1~0~l_3m_3} - a_3D^{l-m}_{1~0~l_4m_4}  \big) \\
&=&   2 k^l_{l_3l_4} G^{l-m}_{l_3m_3l_4m_4}           +        \big( a_4D^{l-m}_{1~0~l_3m_3} - a_3D^{l-m}_{1~0~l_4m_4}\big)
\end{eqnarray*}
As a consequence we have
\begin{eqnarray*}
&&  \frac{1}{2} \ll    [(e_{l_1m_1},a_1)  ,  (e_{l_2m_2},a_2)]     ~~,~~  [(e_{l_3m_3},a_3)   ,   (e_{l_4m_4},a_4)]  \\
&&-\hat{ad}^*_{(e_{l_3m_3},a_3)}    ,   (e_{l_4m_4},a_4)    +\hat{ad}^*_{(e_{l_4m_4},a_4)}(e_{l_3m_3},a_3)      \gg_{\hat\cg}\\
&=&  \sum_{m,l} (-1)^m(\alpha^2 + l(l+1))   G^{lm}_{l_1m_1l_2m_2}     \Big(   G^{l-m}_{l_3m_3l_4m_4}      k^l_{l_3l_4}        +   \frac{ 1}{2}    \big( a_4D^{l-m}_{1~0~l_3m_3} - a_3D^{l-m}_{1~0~l_4m_4}\big)   \Big)\\
&&-\frac{m_1m_3}{2} (-1)^{m_1+m_3}     \delta^{l_1}_{ l_2} \delta^{m_1}_{-m_2}    \delta^{l_3}_{ l_4} \delta^{m_3}_{-m_4}.
\end{eqnarray*}
which completes the proof.
\end{proof}
%
%
%
%
In the sequel we present the \textbf{details for the  computations of equation \eqref{sect. cur Y 1 0} in  section \ref{Cur of zonal flow Y10}.}\\
In this part we consider the assumptions of Dowker and Mo-Zheng \cite{Dowker-Zheng} and extend them to the case of rotating sphere. More precisely  suppose that $\hat\xi=(\nu\sqrt{\frac{4\pi}{3}}e_{10},a)$ be a zonal flow. Moreover, let $\hat{\eta}=(\eta,b)\in \hat\cg$ be a unit vector field such that it contains only one angular momentum $l=l_0$ in the form $\eta=\eta_{l_0m_0}e_{l_0m_0} + \eta_{l_0-m_0}e_{l_0-m_0}$ with $m_0\neq 0$.   Then the sectional curvature of the central extension is
\begin{eqnarray*}
\hat{\kappa}(\hat\xi,\hat\eta)  &=&   \frac{\ll  \hat{R}\big(\hat\xi,\hat\eta\big)\hat\eta~ ,~\hat\xi    \gg_{\hat\cg}}{\ll\hat\xi,\hat\xi\gg_{\hat\cg}}\\
&=&   -  \frac{1}{    \frac{4\pi \nu^2 }{3}(\alpha^2+2)+ a^2  }  \ll  \hat{R}\big(\hat\xi,\hat\eta\big)\hat\xi~ ,~ \hat\eta   \gg_{\hat\cg}
\end{eqnarray*}
where the term  $\ll  \hat{R}\big(\hat\xi,\hat\eta\big)\hat\xi~ ,~ \hat\eta   \gg_{\hat\cg}$ can be computed  by (\ref{eq Curvature formula e}) as it follows.\\
First we note that for $(e_{10},a)  ,   ( e_{l_0m_0},  b), (e_{l_0-m_0}  ,  b)\in\hat{\cg}$ we have
\begin{eqnarray*}
&& \ll  \hat{R}\big((e_{10},a) , ( e_{l_0m_0}   ,  b)\big)(e_{10},a)~ ,~ (   e_{l_0-m_0}  ,  b)   \gg_{\hat\cg} \\
%
%
&=&     \sum_{m,l} (-1)^m( \alpha^2+l(l+1)) \Big[ \Gamma^{lm}_{1~0a~1~0a} \Gamma^{l-m}_{l_0m_0b~ l_0-m_0b}    -  \Gamma^{lm}_{l_0m_0b~1~0a} \Gamma^{l-m}_{1~0a ~l_0-m_0b}\\
&& + G^{lm}_{1~0~l_0m_0}\Big( G^{l-m}_{1~0~l_0-m_0}k^l_{1l_0}    +  {\frac{1}{2}} \big(  b D^{l-m}_{1~0~1~0}  -a D^{l-m}_{1~0~l_0-m_0} \big)             \Big)    \Big]\\
\end{eqnarray*}
It is not difficult to see that  $\Gamma^{lm}_{1~0a~1~0a} =G^{l-m}_{1~0~1~0}=D^{l-m}_{1~0~1~0}=0$ and
\begin{equation*}
\Gamma^{lm}_{l_0m_0b~1~0a} = im_0\sqrt{\frac{3}{4\pi}}\Big(  d^l_{l_01}  + \frac{ a \sqrt{\frac{\pi}{3}}  }{ \alpha^2+l(l+1) }  \Big)\delta^l_{l_0}\delta^m_{m_0}.
\end{equation*}
Moreover we have
\begin{equation*}
\Gamma^{l-m}_{1~0a ~l_0-m_0b}  =   im_0\sqrt{\frac{3}{4\pi}}\Big( d^l_{1l_0 }  -    \frac{ a \sqrt{\frac{\pi}{3}}  }{ \alpha^2+l(l+1) }    \Big)\delta^l_{l_0}\delta^m_{m_0}
\end{equation*}
and
\begin{equation*}
G^{lm}_{1~0~l_0m_0}  =  -G^{l-m}_{1~0~l_0-m_0}   =   -im_0  \sqrt{\frac{3}{4\pi}} \delta^l_{l_0}\delta^m_{m_0}.
\end{equation*}
As a consequence $\ll  \hat{R}\big((e_{10},a) , ( e_{l_0m_0}   ,  b)\big)(e_{10},a)~ ,~ (   e_{l_0-m_0}  ,  b)   \gg_{\hat\cg} $ is
\begin{eqnarray*}
&&\sum_{m,l} (-1)^m( \alpha^2+l(l+1)) \Big[  - im_0\sqrt{\frac{3}{4\pi}}\Big( d^{l}_{l_01 }  +    \frac{ a \sqrt{\frac{\pi}{3}}  }{ \alpha^2+l(l+1) }    \Big)\\
&&\cdot im_0\sqrt{\frac{3}{4\pi}}\Big( d^l_{1l_0 }  -    \frac{ a \sqrt{\frac{\pi}{3}}  }{ \alpha^2+l(l+1) }    \Big)\delta^l_{l_0}\delta^m_{m_0}\\
&&  +   G^{lm}_{1~0~l_0m_0}\Big( G^{l-m}_{1~0~l_0-m_0}k^l_{1l_0}    -  {\frac{1}{2}}  a \sqrt{\frac{4\pi}{3}} \frac{G^{l-m}_{1~0~l_0-m_0}}{\alpha^2+l(l+1)}              \Big)    \Big] \\
\end{eqnarray*}
which is
\begin{eqnarray*}
&=&\sum_{m,l} (-1)^m( \alpha^2+l(l+1)) \Big[  m_0^2\frac{3}{4\pi}\Big( d^l_{l_01 }  +   \frac{ a \sqrt{\frac{\pi}{3}}  }{ \alpha^2+l(l+1) }    \Big)\cdot \Big( d^l_{1l_0 }  \\
&& -    \frac{ a \sqrt{\frac{\pi}{3}}  }{ \alpha^2+l(l+1) }    \Big)\delta^l_{l_0}\delta^m_{m_0} +  \underbrace{ G^{lm}_{1~0~l_0m_0} G^{l-m}_{1~0~l_0-m_0} }_{    (-im_0\sqrt{\frac{3}{4\pi}})(im_0\sqrt{\frac{3}{4\pi}})\delta^l_{l_0}\delta^m_{m_0}      }  \Big(  k^l_{1l_0}    -     \frac{ a \sqrt{\frac{\pi}{3}} }{\alpha^2+l(l+1)}              \Big)    \Big] \\
&=&      (-1)^{m_0}( \alpha^2+l_0(l_0+1))   m_0^2\frac{3}{4\pi}
\Big[  \left( d^{l_0}_{l_01 }  +    \frac{ a \sqrt{\frac{\pi}{3}}  }{ \alpha^2+l_0(l_0+1) }    \right)  \cdot \Big(  d^{l_0}_{1l_0 }    \\
&& -    \frac{ a \sqrt{\frac{\pi}{3}}  }{ \alpha^2+l_0(l_0+1) }    \Big)    +   \left(  k^{l_0}_{1l_0}    -     \frac{ a \sqrt{\frac{\pi}{3}} }{\alpha^2+l_0(l_0+1)}              \right)    \Big]\\
\end{eqnarray*}
\begin{eqnarray*}
&=&  (-1)^{m_0}(\alpha^2+ l_0(l_0+1)) m_0^2\frac{3}{4\pi}    \Big[
d^{l_0}_{l_01}d^{l_0}_{1l_0} + k^{l_0}_{1l_0}     -    \frac{ a \sqrt{\frac{\pi}{3}}  }{ \alpha^2+l_0(l_0+1) } (d^{l_0}_{l_01} -d^{l_0}_{1l_0})\\
&& -   \frac{ a^2 \frac{\pi}{3}}  { (\alpha^2+l_0(l_0+1))^2 }
-\frac{ a \sqrt{\frac{\pi}{3}}  }{ \alpha^2+l_0(l_0+1) }   \Big]\\
\end{eqnarray*}
We note that
\begin{eqnarray*}
d^{l_0}_{l_01}d^{l_0}_{1l_0} + k^l_{1l_0}  = \frac{  -(\alpha^2+2)^2 }{ 4(\alpha^2 +l_0(l_0+1))^2}  
\end{eqnarray*}
and
\begin{eqnarray*}
d^{l_0}_{l_01} -d^{l_0}_{1l_0}  = \frac{  \alpha^2+2 }{ \alpha^2 +l_0(l_0+1)}-1.  
\end{eqnarray*}
As a consequence for  the curvature term we have
\begin{eqnarray*}
&=&  (-1)^{m_0} m_0^2\frac{3}{4\pi}    \Big[  \left( \alpha^2+ l_0(l_0+1) \right) \left( d^{l_0}_{l_01}d^{l_0}_{1l_0} + k^l_{1l_0} \right)     -     a \sqrt{\frac{\pi}{3}}   \left( d^{l_0}_{l_01} -d^{l_0}_{1l_0}   \right)\\
&& -   \frac{ a^2 \frac{\pi}{3}}  { \alpha^2+l_0(l_0+1) }
- a \sqrt{\frac{\pi}{3}}    \Big]\\
&=&  (-1)^{m_0} m_0^2\frac{3}{4\pi}    \Big[    \frac{  -(\alpha^2+2)^2 }{ 4(\alpha^2 +l_0(l_0+1))}      -     a \sqrt{\frac{\pi}{3}}   \left( \frac{  \alpha^2+2 }{ \alpha^2 +l_0(l_0+1)}-1   \right)\\
&& -   \frac{ a^2 \frac{\pi}{3}}  { \alpha^2+l_0(l_0+1) }
- a \sqrt{\frac{\pi}{3}}    \Big]\\
&=&  (-1)^{m_0} m_0^2\frac{3}{4\pi}    \Big[    \frac{  -(\alpha^2+2)^2 }{ 4(\alpha^2 +l_0(l_0+1))}      -     a \sqrt{\frac{\pi}{3}}    \frac{  \alpha^2+2 }{ \alpha^2 +l_0(l_0+1)} -   \frac{ a^2 \frac{\pi}{3}}  { \alpha^2+l_0(l_0+1) }    \Big]\\
&=&  \frac{(-1)^{m_0}}{ \alpha^2+l_0(l_0+1) }   m_0^2\frac{3}{4\pi}    \Big[    -\frac{  1 }{ 4}(\alpha^2+2)^2     -     a \sqrt{\frac{\pi}{3}}   \left( \alpha^2+2 \right) -    a^2 \frac{\pi}{3}    \Big]\\
\end{eqnarray*}
In the sequel  we compute the sectional curvature $\hat\kappa(\hat\xi,\hat\eta)$ where $\hat\xi=(\nu\sqrt{\frac{4\pi}{3}}e_{1~0},\lambda)$ and $\hat{\eta}=(\eta_{l_0m_0}e_{l_0m_0} + \eta_{l_0-m_0}e_{l_0-m_0} ,0)$ with the assumption
\begin{eqnarray}\label{norm-eta=1}
\ll \hat{\eta},\hat{\eta}\gg_{\hat{\cg}}  &=& 2 \eta_{l_0m_0}\eta_{l_0-m_0}\ll e_{l_0m_0} , e_{l_0-m_0}\gg_\cg\\
&=& 2(-1)^{m_0}(\alpha^2+l_0(l_0+1))\eta_{l_0m_0}\eta_{l_0-m_0}=1 \nonumber
\end{eqnarray}
Note that
\[
\hat\xi=   (\nu\sqrt{\frac{4\pi}{3}}  e_{10},  a  )    =    \nu\sqrt{\frac{4\pi}{3}}  (e_{10},\frac{a}{\nu\sqrt{\frac{4\pi}{3}}})  :=   \nu\sqrt{\frac{4\pi}{3}} (e_{10},\lambda)
\]
with $\lambda=\frac{a}{\nu\sqrt{\frac{4\pi}{3}}}$.  Then we have
\begin{eqnarray*}
\hat{\kappa}(\hat\xi,\hat\eta)  &=&   \frac{\ll  \hat{R}\big(\hat\xi,\hat\eta\big)\hat\eta~ ,~\hat\xi    \gg_{\hat\cg}}{\ll\hat\xi,\hat\xi\gg_{\hat\cg}}\\
&=&   -  \frac{1}{    \nu^2\frac{4\pi  }{3}(\alpha^2+2)+ a^2  }  \ll  \hat{R}\big(\hat\xi,\hat\eta\big)\hat\xi~ ,~ \hat\eta   \gg_{\hat\cg}\\
&=&   -  \frac{2\nu^2\frac{4\pi}{3}\eta_{l_0m_0}\eta_{l_0-m_0}}{    \nu^2\frac{4\pi  }{3}(\alpha^2+2)+ a^2  }  \ll  \hat{R}\big((e_{10},\lambda) , ( e_{l_0m_0}   ,  0)\big)(e_{10},\lambda)~ ,~ (   e_{l_0-m_0}  ,  0)   \gg_{\hat\cg} \\
&=&-  \frac{2\nu^2\frac{4\pi}{3}\eta_{l_0m_0}\eta_{l_0-m_0}}{    \nu^2\frac{4\pi  }{3}(\alpha^2+2)+ a^2  }\cdot  \frac{(-1)^{m_0}}{ \alpha^2+l_0(l_0+1) }   m_0^2\frac{3}{4\pi}    \Big[    -\frac{  1 }{ 4}(\alpha^2+2)^2     \\
&&-     \lambda \sqrt{\frac{\pi}{3}}   \left( \alpha^2+2 \right) -    \lambda^2 \frac{\pi}{3}    \Big]
\end{eqnarray*}
\begin{eqnarray*}
&=&-  \frac{2\nu^2}{    \nu^2\frac{4\pi  }{3}(\alpha^2+2)+  a^2  }   \cdot    \frac{m_0^2}{ 2(\alpha^2+l_0(l_0+1))^2 }      \Big[    -\frac{  1 }{ 4}(\alpha^2+2)^2     \\
&&-     \lambda \sqrt{\frac{\pi}{3}}   \left( \alpha^2+2 \right) -    \lambda^2 \frac{\pi}{3}    \Big]\\
&=& -  \frac{    \nu^2m_0^2    }{    (\nu^2     \frac{4\pi  }{3}(\alpha^2+2)+ a^2)    (\alpha^2+l_0(l_0+1))^2      }             \Big[    -\frac{  1 }{ 4}(\alpha^2+2)^2     \\
&&   -     \frac{a}{\nu\sqrt{\frac{4\pi}{3}}}  \sqrt{\frac{\pi}{3}}   \left( \alpha^2+2 \right) -    \frac{a^2}{\nu^2 {\frac{4\pi}{3}}} \frac{\pi}{3}    \Big]\\
&=& -  \frac{    m_0^2    }{   ( \nu^2     \frac{4\pi  }{3}(\alpha^2+2)+  a^2)    (\alpha^2+l_0(l_0+1))^2      }             \Big[    -\frac{  \nu^2 }{ 4}(\alpha^2+2)^2     -      \frac{\nu a}{2}   \left( \alpha^2+2 \right)   -     \frac{a^2}{4}    \Big]\\
\end{eqnarray*}
\begin{eqnarray*}
&=&   \frac{    m_0^2    }{    (\nu^2     \frac{4\pi  }{3}(\alpha^2+2)+  a^2)    (\alpha^2+l_0(l_0+1))^2      }             \Big[    \frac{  \nu^2 }{ 4}(\alpha^2+2)^2     +      \frac{\nu a}{2}   \left( \alpha^2+2 \right)   +     \frac{ a^2}{4}    \Big]\\
\end{eqnarray*}

\textbf{Acknowledgements}. Funded by the Deutsche Forschungsgemeinschaft (DFG, German
Research Foundation) - 517512794.

%
%
%

%
Ali Suri, Universit\"{a}t Paderborn, Warburger Str.\ 100,
33098 Paderborn, Germany; asuri@math.upb.de\vfill
\end{document}